%%%%%%%%%%%%%%%%%%%% author.tex %%%%%%%%%%%%%%%%%%%%%%%%%%%%%%%%%%%
%
% sample root file for your "contribution" to a proceedings volume
%
% Use this file as a template for your own input.
%
%%%%%%%%%%%%%%%% Springer %%%%%%%%%%%%%%%%%%%%%%%%%%%%%%%%%%

\documentclass{svproc}
%
% RECOMMENDED %%%%%%%%%%%%%%%%%%%%%%%%%%%%%%%%%%%%%%%%%%%%%%%%%%%
%

% to typeset URLs, URIs, and DOIs
\usepackage{url}

% OUR PACKAGES
\usepackage[T1]{fontenc}
\usepackage{lmodern}
\usepackage{amsmath,amssymb}
\usepackage{hyperref}
\usepackage{microtype}
\usepackage{hyperref}
\usepackage{graphicx}
\usepackage{enumitem}
\usepackage{microtype}

\newtheorem{observation}{Observation}

\begin{document}
\mainmatter              % start of a contribution
\title{On ordered Ramsey numbers of matchings versus triangles}
\titlerunning{On ordered Ramsey numbers of matchings versus triangles
}  % abbreviated title (for running head)
%                                     also used for the TOC unless
%                                     \toctitle is used
%
\author{Martin Balko \and Marian Poljak}
\authorrunning{{Martin Balko and Marian Poljak}} % abbreviated author list (for running head)
%
%%%% list of authors for the TOC (use if author list has to be modified)
%\tocauthor{Martin Balko and  Marian Poljak}
%
\institute{
Department of Applied Mathematics, Faculty of Mathematics and Physics, Charles University, Czech Republic\\
\email{\texttt{balko@kam.mff.cuni.cz}, \texttt{marian@kam.mff.cuni.cz}}
%\\ WWW home page: \texttt{https://kam.mff.cuni.cz/\homedir balko/}
}

\maketitle              % typeset the title of the contribution

\begin{abstract}
For graphs $G^<$ and $H^<$ with linearly ordered vertex sets, the \emph{ordered Ramsey number} $r_<(G^<,H^<)$ is the smallest positive integer $N$ such that any red-blue coloring of the edges of the complete ordered graph $K^<_N$ on $N$ vertices contains either a blue copy of $G^<$ or a red copy of $H^<$.
Motivated by a problem of Conlon, Fox, Lee, and Sudakov (2017), we study the numbers $r_<(M^<,K^<_3)$ where $M^<$ is an ordered matching on $n$ vertices.

We prove that almost all $n$-vertex ordered matchings $M^<$ with interval chromatic number 2 satisfy $r_<(M^<,K^<_3) \in \Omega((n/\log n)^{5/4})$ and $r_<(M^<,K^<_3) \in O(n^{7/4})$, improving a recent result by Rohatgi (2019).
We also show that there are $n$-vertex ordered matchings $M^<$ with interval chromatic number at least 3 satisfying $r_<(M^<,K^<_3) \in \Omega((n/\log n)^{4/3})$, which asymptotically matches the best known  lower bound on these off-diagonal ordered Ramsey numbers for general $n$-vertex ordered matchings.

\keywords{ordered Ramsey number, ordered matching, off-diagonal Ramsey number, triangle}
\end{abstract}

\section{Introduction}

For graphs $G$ and $H$, their \emph{Ramsey number} $r(G,H)$ is the smallest positive integer $N$ such that any coloring of the edges of $K_N$ with colors red and blue contains either $G$ as a subgraph with all edges colored red or $H$ as a subgraph with all edges colored blue.
The existence of these numbers was proved by Ramsey~\cite{ramseyprvni} and later independently by Erd\H{o}s and Szekeres~\cite{erdosprvni}.
The study of the growth rate of these numbers with respect to the number of vertices of $G$ and $H$ is a classical part of combinatorics and plays a central role in Ramsey theory.

Motivated by several applications from discrete geometry and extremal combinatorics, various researchers~\cite{balko,bjv16,cp02,conlon,msw15,rohatgi} started investigating an ordered variant of Ramsey numbers.
An \emph{ordered graph} $G^<$ is a graph $G$ together with a linear ordering of its vertex set.
An ordered graph $G^<$ is an \emph{ordered subgraph} of another ordered graph $H^<$ if $G$ is a subgraph of $H$ and the vertex ordering of $G$ is a suborder of the vertex ordering of $H$.
The \emph{ordered Ramsey number} $r_<(G^<,H^<)$ of ordered graphs $G^<$ and $H^<$ is the smallest positive integer $N$ such that any coloring of the edges of the complete ordered graph $K^<_N$ on $N$ vertices with colors red and blue contains either $G^<$ as an ordered subgraph with all edges red or $H^<$ as an ordered subgraph with all edges blue.

It is known that the ordered Ramsey numbers always exist and that they can behave very differently from the unordered Ramsey numbers.
For example, there are ordered \emph{matchings} $M^<$ (that is, 1-regular ordered graphs) on $n$ vertices for which $r_<(M^<,M^<)$ grows superpolynomially in $n$, in particular, we have $r_<(M^<,M^<) \in 2^{\Omega(\log^2{n}/\log{\log{n}})}$~\cite{balko,conlon} while $r(G,G)$ is linear for all graphs $G$ with bounded maximum degrees~\cite{crst83}.
The superpolynomial bound obtained for ordered matchings is almost tight for sparse graphs as, for every fixed $d \in\mathbb{N}$, every $d$-degenerate ordered graph $G^<$ on $n$ vertices satisfies $r_<(G^<,G^<) \in 2^{O(\log^2{n})}$~\cite{conlon}.

One of the most interesting cases for ordered Ramsey numbers is the study of the growth rate of $r_<(M^<,K^<_3)$ where $M^<$ is an ordered matching on $n$ vertices as this is one of the first non-trivial cases where the exact asymptotics is not known.
Conlon, Fox, Lee, and Sudakov~\cite{conlon}  observed that the classical bound $r(K_n,K_3) \in O(n^2/\log{n})$ immediately gives $r_<(M^<,K^<_3) \in O(n^2/\log{n})$.
On the other hand, they showed that there exists a positive constant $c$ such that, for all even positive integers $n$, there is an ordered matching $M^<$ on $n$ vertices with
\begin{equation}
\label{cflsLowerBound}
r_<(M^<,K^<_3) \geq  c\left(\frac{n}{\log n}\right)^{4/3}.
\end{equation}
Conlon, Fox, Lee, and Sudakov expect that the upper bound $r_<(M^<,K^<_3) \leq O(n^2/\log{n})$ is far from optimal and posed the following open problem~\cite{conlon}, which is also mentioned in a survey on recent developments in graph Ramsey theory~\cite{recent_developments}.

\begin{problem}[\cite{conlon}]
\label{cflsProblem}
Does there exist an $\epsilon > 0$ such that for any ordered matching $M^<$ on $n$ vertices $r_<(M^<,K^<_3) \in O(n^{2-\varepsilon})$?
\end{problem}

Problem~\ref{cflsProblem} is one of the most important questions in the theory of ordered Ramsey numbers as in order to get a subquadratic upper bound on $r_<(M^<,K^<_3)$ one has to be able to employ the sparsity of $M^<$ since the bound $r(K_n,K_3) \in O(n^2/\log{n})$ is asymptotically tight by a famous result of Kim~\cite{odhadtrojuhelnik2}.
Being able to use the sparsity of $M^<$ and thus distinguish $M^<$ from $K^<_n$ could help in numerous problems on ordered Ramsey numbers.
However, this is difficult as some ordered matchings $M^<$ can be used to approximate the behavior of complete graphs, which is the reason why the numbers $r_<(M^<,M^<)$ can grow superpolynomially. 

Some partial progress on Problem~\ref{cflsProblem} was recently made by Rohatgi~\cite{rohatgi} who considered ordered matchings with bounded interval chromatic number.
The \emph{interval chromatic number} $\chi_<(G^<)$ of an ordered graph $G^<$ is the minimum number of intervals the vertex set of $G^<$ can be partitioned into so that there is no edge of~$G^<$ with both vertices in the same interval.
The interval chromatic number can be understood as an analogue of the chromatic number for ordered graphs as, for example, there is a variant of the Erd\H{o}s--Stone--Simonovits theorem for ordered graphs~\cite{pachTardos06} that is expressed in terms of the interval chromatic number.

Rohatgi~\cite{rohatgi} showed that the subquadratic bound on $r_<(M^<,K^<_3)$ holds for almost all ordered matchings with interval chromatic number 2 by proving the following result.

\begin{theorem}[\cite{rohatgi}]
\label{thm-rohatgi}
There is a constant $c$ such that for every even positive integer $n$, if an ordered matching $M^<$ on $n$
vertices with $\chi_<(M^<)=2$ is picked uniformly at random, then with high probability
\[r_<(M^<,K^<_3) \leq cn^{24/13}.\]
\end{theorem}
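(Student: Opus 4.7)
The plan is to fix a red-blue coloring of $K_N^<$ with $N = \lceil c n^{24/13} \rceil$ and no blue $K_3$, and show that for a uniformly random ordered matching $M^<$ on $n$ vertices with $\chi_<(M^<) = 2$ a red copy of $M^<$ exists with high probability. First I would dispose of the easy case: if some vertex $v$ has blue degree at least $n$, then since the coloring has no blue $K_3$ the blue neighborhood $N_B(v)$ is a red clique of size at least $n$, and any red $K_n$ contains every $n$-vertex ordered graph (in particular $M^<$) as an ordered subgraph. Hence I may assume the blue graph has maximum degree at most $n-1$, equivalently the red graph has minimum degree at least $N-n$ and at most $Nn$ blue edges overall.

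Using $\chi_<(M^<) = 2$, I would parametrize $M^<$ by a uniformly random bijection $\pi \colon [n/2] \to [n/2]$, so that the edges of $M^<$ are $\{l_i r_{\pi(i)} : i \in [n/2]\}$ with $l_1 < \cdots < l_{n/2} < r_1 < \cdots < r_{n/2}$. I would split the host vertices into consecutive intervals $A = [1, N/2]$ and $B = [N/2+1, N]$. A red copy of $M^<$ respecting the order then corresponds to choosing increasing sequences $a_1 < \cdots < a_{n/2}$ in $A$ and $b_1 < \cdots < b_{n/2}$ in $B$ with every edge $a_i b_{\pi(i)}$ red; equivalently, the $(n/2) \times (n/2)$ bipartite submatrix of the red graph induced by $(a_i, b_j)$ must contain the permutation matrix of $\pi$. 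The bipartite red graph between $A$ and $B$ is missing at most $Nn$ edges by the previous step.

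The main step is a second-moment argument over the random $\pi$. Let $X_\pi$ count the pairs $((a_i), (b_j))$ of increasing sequences for which the embedding $l_i \mapsto a_i$, $r_j \mapsto b_j$ is entirely red. Summing over all such pairs the fraction of bijections that avoid the $k$ blue edges inside each induced submatrix (estimated from below by roughly $1 - O(k/n)$ via a standard permanent bound) together with the blue-edge budget gives a lower bound $\mathbb{E}_\pi[X_\pi] \gtrsim \binom{N/2}{n/2}^2$. By Paley--Zygmund, it then suffices to show $\mathbb{E}_\pi[X_\pi^2] \leq (1+o(1))\, \mathbb{E}_\pi[X_\pi]^2$ to conclude $X_\pi > 0$ with probability $1 - o(1)$.

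The hard part will be this upper bound on the second moment. One has to estimate, for each pair of candidate embeddings stratified by how many vertices they share on each side, the number of bijections $\pi$ simultaneously valid for both; the bounded blue-degree assumption must be used precisely at this step to limit the correlations introduced by shared vertices. Summing the resulting contributions over all overlap patterns and balancing the parameters in the cost function is what should force the exponent $24/13$ rather than something weaker, and this balancing is where I expect the delicate and quantitatively tight part of the argument to sit. If the direct second-moment estimate falls short, a preprocessing step via dependent random choice --- replacing $A$ by a random subset whose common red neighborhood in $B$ is almost all of $B$ --- could serve as a fallback to tighten the density of the bipartite graph inside which the second-moment argument is run.
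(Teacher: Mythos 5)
Your plan inverts the order of quantifiers in a way that is fatal. The theorem asserts that with high probability over the random matching $M^<$, \emph{every} red-blue coloring of $K^<_N$ with $N=O(n^{24/13})$ contains a blue $K^<_3$ or a red copy of $M^<$; in symbols, $\Pr_{M^<}[\forall \chi: \text{good}] = 1 - o(1)$. What your second-moment and Paley--Zygmund argument targets, for each fixed triangle-free coloring $\chi$, is $\Pr_{M^<}[\text{red } M^< \text{ in } \chi] = 1-o(1)$, i.e.\ $\forall \chi: \Pr_{M^<}[\text{good}] = 1-o(1)$. The latter does not imply the former, because the exceptional set of matchings is allowed to depend on $\chi$. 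To bridge the gap you would need a union bound over all triangle-free colorings of $K^<_N$; even after your reduction to maximum blue degree below $n$ there remain superexponentially many such colorings, while Paley--Zygmund cannot produce a failure probability anywhere near small enough to survive that union bound. Your dependent-random-choice fallback is likewise run inside a single fixed $\chi$ and does not address this.

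The structure that actually works --- used by Rohatgi, and pushed further to the exponent $7/4$ in Theorem~\ref{thm-upperBound} of this paper --- is to isolate a \emph{deterministic} combinatorial property of $M^<$ that holds with high probability and that by itself forces the Ramsey bound for every coloring. Here the property is a bound on $L(\pi_{M^<})$, the length of the largest pattern the permutation $\pi_{M^<}$ shares with a shift of itself. Theorem~\ref{thm-upperBoundStronger} shows, deterministically and uniformly over all colorings via a multi-thread scanning argument, that $L(\pi_{M^<})\le\ell$ implies $r_<(M^<,K^<_3)\le 4n(\sqrt{n\ell}+1)$, and Lemma~\ref{lem-HeKwan} (He--Kwan) supplies $L(\pi)\le 3\sqrt n$ with high probability for a uniform random permutation $\pi$. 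The randomness of $M^<$ is consumed exactly once, to certify the combinatorial property; afterwards the argument is uniform in $\chi$, which is precisely what the theorem statement requires. Your opening reduction to bounded blue degree and your parametrization of $M^<$ by a permutation are both fine and also appear in the scanning approach, but the probabilistic engine at the heart of your plan would need to be replaced by a deterministic scanning lemma keyed to a w.h.p.\ structural property of $\pi_{M^<}$.
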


Motivated by Problem~\ref{cflsProblem}, we study the numbers $r_<(M^<,K^<_3)$ for ordered matchings with bounded interval chromatic number.
We strengthen some bounds by Rohatgi~\cite{rohatgi} and by Conlon, Fox, Lee, and Sudakov~\cite{conlon}, obtaining a new partial progress on Problem~\ref{cflsProblem}.

From now on, we omit floor and ceiling signs whenever they are not essential.
For a positive integer $n$, we use $[n]$ to denote the set $\{1,\dots,n\}$.
All logarithms in this paper are base 2.

\section{Our results}

We try to tackle the first non-trivial instance of Problem~\ref{cflsProblem} by considering the typical behavior of the numbers $r_<(M^<,K^<_3)$ for ordered matchings with interval chromatic number 2.
As far as we know, there is no non-trivial lower bound in this case.
In his paper, Rohatgi~\cite{rohatgi} mentions the problem of obtaining lower bounds that would come closer to the upper bound from Theorem~\ref{thm-rohatgi}.
As our first result, we prove the first superlinear lower bound for this case.

\begin{theorem}
\label{jumbled_bipartite_lower_bound}
There exists a positive constant $c$ such that, for all even positive integers $n$, if an ordered matching $M^<$ on $n$
vertices with $\chi_<(M^<)=2$ is picked uniformly at random, then with high probability 
\[r_<(M^<, K_3^<) \geq c\left(\frac{n}{\log n}\right)^{5/4}.\]
\end{theorem}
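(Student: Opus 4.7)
The plan is to apply the probabilistic method with alteration. Set $N = c(n/\log n)^{5/4}$ for a small constant $c > 0$, and consider a random 2-coloring of $E(K_N^<)$ where each edge is colored red independently with probability $p = \Theta(N^{-2/3})$. For this choice the expected number of red triangles is $O(N)$, so alteration (removing one vertex from each red triangle) leaves a triangle-free red graph $R$ on a subset $V' \subseteq [N]$ of size $\Omega(N)$, retaining a pseudo-random structure inherited from the original Erd\H{o}s--R\'enyi coloring.

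Let $B = K_{V'} \setminus R$. A random ordered matching $M^<$ with $\chi_<(M^<) = 2$ corresponds to a uniformly random permutation $\pi \in S_{n/2}$, and the probability that $M^<$ embeds in $B$ is at most the expected number of embeddings. This expectation equals $L_{n/2}(B)/(n/2)!$, where $L_{n/2}(B)$ counts the \emph{layered matchings} of size $n/2$ in $B$ --- matchings whose left endpoints all precede the right endpoints in the linear order. The target estimate is $L_{n/2}(B) = o((n/2)!)$ with positive probability over the coloring, so that by Markov's inequality a random $M^<$ fails to embed in $B$ with high probability. Combined with the triangle-freeness of $R$, this witnesses $r_<(M^<, K_3^<) > |V'| = \Omega((n/\log n)^{5/4})$.

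The main obstacle is the required bound on $L_{n/2}(B)$. Naive averaging over the random coloring yields $\mathbb{E}[L_{n/2}(B)] \approx \binom{N}{n}(n/2)!(1-p)^{n/2}$, and even joint averaging over both the coloring and $\pi$ pins the bound at $\binom{N}{n}(1-p)^{n/2}$ --- this forces $\log(N/n) \lesssim p$, yielding only a linear lower bound regardless of $p$. To break past this barrier, I would exploit the pseudo-randomness of the alteration-produced $R$: for a typical ordered $n$-subset $S = L \sqcup R_S$ of $V'$, the induced bipartite red graph has degrees tightly concentrated around $(n/2)\cdot p$, and combining a Br\'egman-type upper bound on $\mathrm{per}(B[L, R_S])$ with this concentration provides an estimate for $\mathrm{per}(B[L, R_S])/(n/2)!$ that exploits the ``slack'' beyond the crude $(1-p)^{n/2}$ bound. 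Summing the refined estimate over ordered $n$-subsets $S$ --- balancing against $\log \binom{N}{n} = \Theta(n \log n)$ in our regime and using the randomness of $\pi$ to rule out concentrated ``bad'' patterns via a second-moment or Janson-type argument --- should then give $L_{n/2}(B) = o((n/2)!)$ precisely when $N \lesssim (n/\log n)^{5/4}$. The technical heart of the proof is carrying out this refined permanent estimate with the correct sharp constants, since it is the only step in which the target exponent $5/4$ is actually pinned down; the rest of the argument is standard alteration plus Markov.
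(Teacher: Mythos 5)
Your approach is fundamentally different from the paper's and, as written, cannot reach the claimed bound. You propose a single-level purely random construction: color edges red (your sparse color) with probability $p=\Theta(N^{-2/3})$, alter to make the red graph triangle-free, and then argue by Markov over $\pi$ that a typical $M^<$ fails to embed in the dense complement $B$. You correctly observe that the naive first-moment computation only yields a linear bound, but the ``refined permanent estimate'' you invoke to break past this is not a gap in details --- it is a wall. Your framework needs the number of layered matchings in $B$ to drop below $(n/2)!$, i.e.\ only an $e^{-\Theta(n\log n)}$-fraction of layered matchings may survive in $B$, since there are $\binom{N}{n}(n/2)!$ of them in $K_N^<$. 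But each layered matching survives with probability $(1-p)^{n/2}\approx e^{-pn/2}$, and for any achievable $p\le 1$ this is at least $e^{-\Theta(n)}$, which is exponentially far from $e^{-\Theta(n\log n)}$ whenever $N\ge n^{1+\varepsilon}$. No Br\'egman-type bound or second-moment refinement changes the order of the exponent; the single unstructured random coloring simply does not kill enough embeddings, and conditioning on the randomness of $\pi$ does not help because your construction of $B$ is independent of $\pi$.

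The ingredient you are missing is the two-level blow-up structure that the paper uses. The paper sets $N=n^{5/4}$ and partitions $[N]$ into $n^{1/2}$ blocks of size $n^{3/4}$, then applies the Lov\'asz local lemma (Lemma~\ref{lem-lovaszApplication}) to produce a base coloring $\chi'$ on only $n^{1/2}$ vertices that simultaneously forbids three event families: blue triangles, red copies of a family $\mathcal{G}$ of ``quotient'' graphs, and a red $K^<_{a,a}$ with $a=10n^{1-\alpha}\log n$. The final coloring $\chi$ is the blow-up of $\chi'$ with all intra-block edges red. Crucially, the pseudo-randomness of a uniform random bipartite ordered matching (Lemma~\ref{lem-densityRandom}) --- an edge between any two large intervals, few edges between small ones --- forces any order-preserving embedding $\phi$ of $M^<$ into the red part of $\chi$ to create, in the quotient graph $G^<(\phi)$, either a large red complete bipartite graph or a red member of $\mathcal{G}$, both forbidden by $\chi'$. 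This is where the exponent $5/4$ is actually determined, via the constraints $\alpha+\beta+\gamma\le 3/2$, $\beta\le 2\gamma$, $\alpha+\gamma\le 1$ in Lemma~\ref{lem-lovaszApplication} with $\delta=0$. Your proposal contains neither the block reduction nor the multiple LLL event types nor the pseudo-randomness lemma for $M^<$, and replacing them with a single alteration step over-constrains $p$ to the point where the argument cannot give any superlinear bound.
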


We also show that this lower bound can be improved for ordered matchings $M^<$ with $\chi_<(M^<) > 2$.

\begin{theorem}
\label{jumbled_k-partite_lower_bound}
For every integer $k \geq 3$, there exists a positive constant $c=c(k)$ such that, for all even positive integers $n$, there exists an ordered
matching $M^<$ on $n$ vertices with $\chi_<(M^<) =k$ satisfying
\[r_<(M^<, K_3^<) \geq c\left(\frac{n}{\log n}\right)^{4/3}.\]
\end{theorem}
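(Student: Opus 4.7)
The plan is to deduce Theorem~\ref{jumbled_k-partite_lower_bound} from the Conlon--Fox--Lee--Sudakov bound in~(\ref{cflsLowerBound}) via a short padding argument that adjusts the interval chromatic number to exactly $k$. The key tool is the observation that, for any ordered matching $M^<$, one has $\chi_<(M^<) = 1 + \nu(M^<)$, where $\nu(M^<)$ denotes the maximum number of edges of $M^<$ whose associated closed integer intervals $[a,b]$ are pairwise disjoint; this follows from the classical interval piercing duality applied to the intervals $[a,b-1]$ of the edges $\{a,b\} \in M^<$.

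Let $M_0^<$ be the matching on $n_0 := n - 2(k - c_0)$ vertices given by the CFLS construction achieving~(\ref{cflsLowerBound}), where $c_0 := \chi_<(M_0^<)$; one needs $c_0 \leq k$, and if the construction does not a priori satisfy this I would slightly adapt it (e.g.\ by sampling the random perfect matching of~\cite{conlon} within a fixed partition of $[n_0]$ into at most $k$ intervals, which only loses constants in the probabilistic estimates).

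I would then build $M^<$ on $n$ vertices by placing $M_0^<$ on the vertices $\{2(k - c_0) + 1, \ldots, n\}$ and prepending $k - c_0$ short edges $\{2i-1, 2i\}$ for $i = 1, \ldots, k - c_0$. Since the added short edges are pairwise interval-disjoint and also interval-disjoint from every edge of $M_0^<$, one gets $\nu(M^<) = (k - c_0) + \nu(M_0^<) = k - 1$, so $\chi_<(M^<) = k$ by the characterization above. As $M_0^<$ is an ordered subgraph of $M^<$, monotonicity of ordered Ramsey numbers under ordered subgraphs yields
\[
r_<(M^<, K_3^<) \geq r_<(M_0^<, K_3^<) \geq c' \left(\frac{n_0}{\log n_0}\right)^{4/3} \geq c(k) \left(\frac{n}{\log n}\right)^{4/3},
\]
where the last inequality absorbs the $O_k(1)$ vertex discrepancy into the constant.

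The main obstacle is verifying that the CFLS matching (or a close variant thereof) can be chosen with $\chi_<$ at most $k$: a uniformly random ordered matching on $N$ vertices typically has interval chromatic number of order $\Theta(N/\log N)$, so the probabilistic construction of~\cite{conlon} must be localized to a bounded number of intervals. I expect this localization to go through with essentially the same calculations as in~\cite{conlon}, but this is the step that requires the most care.
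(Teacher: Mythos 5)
Your padding step is sound: the characterization $\chi_<(M^<) = 1 + \nu(M^<)$ via interval-piercing duality is correct (for a matching, disjointness of the closed intervals $[a,b]$ and of the intervals $[a,b-1]$ coincide, since distinct edges share no endpoint), prepending $k-c_0$ pairwise-separated short edges does raise $\chi_<$ to exactly $k$, and monotonicity under ordered subgraphs gives $r_<(M^<,K^<_3)\ge r_<(M_0^<,K^<_3)$. But the gap is exactly where you flag it, and it is not a technicality that one can ``expect to go through'': the matching from~\cite{conlon} achieving~\eqref{cflsLowerBound} comes with no control whatsoever on $\chi_<$, and there is no reason to have $c_0\le k$. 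Your fallback of ``sampling the random matching within a fixed partition into at most $k$ intervals, losing only constants'' is not a minor adaptation. Restricting the randomness to a bounded number of parts materially weakens the quantitative pseudorandomness (there are no edges inside a part, and the bound on the number of edges between a short interval and a long one degrades), which in turn shifts the exponents in the Lov\'asz-local-lemma step. In the paper this surfaces concretely: Lemma~\ref{lem-lovaszApplication} is invoked in Section~\ref{sec-k-partite} with a \emph{nonzero} slack $\delta=1/6$ and parameters $\alpha=\beta=2/3$, $\gamma=1/3$, precisely to absorb the weaker jumbledness of a $k$-interval matching, whereas the $\chi_<=2$ case (Theorem~\ref{jumbled_bipartite_lower_bound}) runs with $\delta=0$. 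So the step you leave to ``essentially the same calculations'' is in fact the whole content of the theorem.

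The paper also avoids re-randomizing entirely: it constructs an \emph{explicit} ordered matching $M^<_{k,t}$ by intertwining $\binom{k}{2}$ copies of a $t\times t$ permutation-matrix matching $M^<_t$ across superblocks of $k$ consecutive parts, and Lemma~\ref{lem-jumbled} establishes deterministic jumbledness (at least one edge between any two long intervals in distinct parts, at most $(2k+1)^2$ edges between any two short ones). This replaces the probabilistic density lemma (Lemma~\ref{lem-densityRandom}) used for $\chi_<=2$, and then the local-lemma argument is redone with the adjusted parameters. Treating~\eqref{cflsLowerBound} as a black box and padding cannot substitute for this: you would still need to construct a suitably jumbled matching with $\chi_<\le k$ and rerun the local lemma, which is what Section~\ref{sec-k-partite} does.
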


Note that the lower bound from Theorem~\ref{jumbled_k-partite_lower_bound} asymptotically matches the bound~\eqref{cflsLowerBound} by Conlon, Fox, Lee, and Sudakov~\cite{conlon}.
Thus, the best known lower bound on $r_<(M^<,K^<_3)$ for general ordered matchings $M^<$ can be obtained also for ordered matchings with bounded interval chromatic number as long as this number is at least $3$.
The proofs of Theorems~\ref{jumbled_bipartite_lower_bound} and~\ref{jumbled_k-partite_lower_bound} are probabilistic and are based on ideas used by Conlon, Fox, Lee, and Sudakov~\cite{conlon}.

Rohatgi~\cite{rohatgi} was also interested in determining how far from the truth the exponent $24/13$ from Theorem~\ref{thm-rohatgi} is.
We narrow the gap there by providing the following upper bound that strengthens Theorem~\ref{thm-rohatgi}.

\begin{theorem}
\label{thm-upperBound}
There is a constant $c$ such that for every even positive integer $n$, if an ordered matching $M^<$ on $n$
vertices with $\chi_<(M^<)=2$ is picked uniformly at random, then with high probability
\[r_<(M^<,K^<_3) \leq cn^{7/4}.\]
\end{theorem}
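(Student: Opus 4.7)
My proof plan is a probabilistic embedding argument refining the approach behind Theorem~\ref{thm-rohatgi}. Fix $N = cn^{7/4}$ and an arbitrary $2$-coloring of $K_N^<$ with no red $K_3$; encode a random ordered matching $M^<$ on $n$ vertices with $\chi_<(M^<)=2$ by a uniformly random bijection $\sigma$ of $[n/2]$. The goal is to show that with probability $1-o(1)$ over $\sigma$, a blue ordered copy of $M^<$ exists in the coloring.

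\textbf{Preliminary reduction.} If some vertex $v$ has red degree at least $n$, then by triangle-freeness its red neighborhood is an independent set in red, hence a blue clique of size at least $n$; since a blue $K_n$ contains every $n$-vertex ordered graph as an ordered subgraph, we are done. Thus I may assume that the maximum red degree $\Delta$ satisfies $\Delta < n$, which yields the global bound $e_R \leq Nn/2$ on the number of red edges.

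\textbf{Low-degree case.} Set the threshold $\Delta^* := n^{3/4}$. If $\Delta \leq \Delta^*$, then a random embedding works for every fixed $\sigma$: pick a uniformly random $n$-element subset $S \subset [N]$ and let $L, R$ be the first and last $n/2$ elements of $S$. By linearity of expectation, the expected number of red edges in the embedded $M^<$ is on the order of $n\Delta/N = \Delta/n^{3/4}$. Choosing $c$ large enough makes this bounded by a small constant, so by Markov's inequality a positive fraction of choices of $(L, R)$ yields an entirely blue embedding, producing a blue copy of $M^<$.

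\textbf{High-degree case.} The crux is the regime $\Delta > \Delta^*$. A vertex $v$ of maximum red degree provides a blue clique $C$ (its red neighborhood) of size at least $n^{3/4}$; since $|C| < n$, this clique alone does not contain $M^<$, and the randomness of $\sigma$ must be exploited. Standard discrepancy properties of a uniformly random permutation (e.g.\ Chernoff-type concentration on random bijections) imply that with high probability $M^<$ admits a decomposition into a sub-matching routed through $C$ (its edges become automatically blue because $C$ is a blue clique) and a residual sub-matching embedded into the remainder of $[N]$, where the argument of the previous paragraph can be applied.

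\textbf{Main obstacle.} The delicate step is the quantitative balance between these two regimes. Naively deleting all vertices of red degree above $\Delta^*$ may remove up to $\Theta(Nn^{1/4})$ vertices, which can dominate $[N]$ and destroy the ambient graph. Instead, one must combine the blue cliques arising from high-red-degree vertices with the concentration of a random matching in an iterative, surgical way, carefully tracking how many edges of $M^<$ are absorbed by each clique and how many remain for the low-degree embedding. The exponent $7/4$ emerges from optimizing so that $\Delta^* \approx N/n$: both quantities equal $n^{3/4}$ when $N = cn^{7/4}$, yielding the improvement over Rohatgi's exponent $24/13$.
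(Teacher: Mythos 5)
Your proposal takes a genuinely different route from the paper, but both of its main steps have gaps.

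The low-degree step asserts that the order-preserving random embedding (map the $i$-th vertex of $M^<$ to the $i$-th smallest element of a uniform random $n$-subset $S\subset[N]$) has expected number of red edges $O(n\Delta/N)$ for \emph{every} fixed $\sigma$. That expectation computation is not correct, because the pairs $(s_i,s_{n/2+\sigma(i)})$ are far from uniformly distributed: $s_{n/2+\sigma(i)}-s_i$ concentrates around $(n/2+\sigma(i)-i)N/n$ with fluctuations only of order $N/\sqrt{n}$. Concretely, color red exactly the edges $\{a,b\}$ with $b-a\in[N/2,\,N/2+n^{3/4})$; this red graph is triangle-free with maximum degree $\Theta(n^{3/4})$, squarely in your low-degree regime. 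If $\sigma$ is the identity, then each $s_{n/2+i}-s_i$ concentrates around $N/2$ with standard deviation $\Theta(N/\sqrt{n})=\Theta(n^{5/4})$, so each of the $n/2$ matching edges lands in the red band with probability $\Theta(n^{3/4}/n^{5/4})=\Theta(n^{-1/2})$ and the expected number of red edges in the embedding is $\Theta(\sqrt{n})$, not $O(1)$, for any constant $c$. A blue ordered copy of that $M^<$ does exist in the coloring (shift the second half of the image set by slightly more than $n^{3/4}$ past $N/2$), but the uniformly random embedding does not find it; so the randomness of $M^<$ cannot be dropped even when the maximum red degree is small, and the dichotomy you set up does not carve the problem at the right joint. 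The high-degree step is not an argument: you gesture at an iterative absorption into blue cliques and then explicitly concede the ``main obstacle'' without resolving it.

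The paper proceeds very differently and never splits by degree. It introduces the shift statistic $L(\pi)$, the largest $k$ for which $\pi$ contains two $k$-element index sets whose images differ by a common translation, and proves a deterministic statement (Theorem~\ref{thm-upperBoundStronger}): if $L(\pi_{M^<})\leq\ell$ and $N\geq 4n(\sqrt{n\ell}+1)$, then every coloring of $K^<_{2N}$ contains a blue $K^<_3$, a red $K^<_{2n}$, or a red ordered copy of $M^<$. The proof runs a multi-thread scanning procedure over the $N\times N$ bipartite color matrix; each thread that fails to find a red $M^<$ reveals nearly $N$ blue entries, the overlap between the blue segments revealed by two different threads is bounded by $L(\pi)$, and after $T=\sqrt{n/\ell}$ threads some vertex accumulates blue degree at least $2n$, forcing a blue triangle or a red $K^<_{2n}$. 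Randomness of $M^<$ enters only through the He--Kwan lemma (Lemma~\ref{lem-HeKwan}), which gives $L(\pi)\leq 3\sqrt{n}$ with high probability, and setting $\ell=3\sqrt{n}$ yields $N=O(n^{7/4})$. Your heuristic $\Delta^*\approx N/n\approx n^{3/4}$ matches $\sqrt{n\ell}$ numerically, but the actual bottleneck is the shift overlap between scanning threads, not a red-degree threshold.
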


Note that the difference between the exponent in the lower bound from Theorem~\ref{jumbled_bipartite_lower_bound} and the exponent in the upper bound from Theorem~\ref{thm-upperBound} is exactly $1/2$.

\section{Open problems}

Problem~\ref{cflsProblem} still remains wide open, but there are many interesting intermediate questions that one could try to tackle.
The following interesting conjecture was posed by Rohatgi~\cite{rohatgi}.

\begin{conjecture}[\cite{rohatgi}]
\label{conj-rohatgi}
For every integer $k \geq 2$, there is a constant $\varepsilon = \varepsilon(k) > 0$ such that 
\[r_<(M^<,K^<_3) \in O(n^{2-\varepsilon})\]
for almost every ordered matching $M^<$ on $n$ vertices with $\chi_<(M^<) = k$.
\end{conjecture}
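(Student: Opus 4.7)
The case $k=2$ is already Theorem~\ref{thm-upperBound}, so the task is to generalize its argument to arbitrary fixed $k \ge 2$. My plan is to view a random ordered matching with $\chi_<(M^<)=k$ as the union of $\binom{k}{2}$ ``random bipartite matchings,'' one per pair of its color intervals, and embed them simultaneously into a dense pseudorandom blue structure carved out of a triangle-free red host.

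Setting $N = C_k\, n^{2-\varepsilon(k)}$ with $\varepsilon(k)>0$ to be chosen small enough (and shrinking in $k$), I would first show that any red-blue coloring of $K_N^<$ whose red subgraph is triangle-free contains an ordered subset $S$ of size $\Theta(n)$, partitioned into $k$ consecutive intervals $I_1, \dots, I_k$ of equal size, such that for every pair $(I_i, I_j)$ the blue bipartite graph on $I_i \cup I_j$ is dense and jumbled in the sense of Thomason. This step follows the preparatory stage of Theorem~\ref{thm-upperBound} but now needs pseudorandomness on all $\binom{k}{2}$ pairs simultaneously, which can be forced by iteratively restricting to a large subinterval on which the desired pairwise bound holds. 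In parallel, I would study the typical structure of a uniformly random ordered matching with $\chi_<(M^<) = k$: conditioning on a feasible interval partition $J_1, \dots, J_k$ of $[n]$, the bipartite pieces $M^<_{ij} := M^< \cap (J_i \times J_j)$ have, with high probability, edge counts concentrated near $|J_i|\,|J_j|/(n-1)$ and near-uniform placement, which can be established by a standard switching argument for random perfect matchings. The last step is to embed $M^<$ into the blue graph on $I_1 \cup \dots \cup I_k$ via a dependent-random-choice (or absorption) procedure that treats the $M^<_{ij}$'s one pair at a time while keeping a large reservoir of free candidate vertices in each $I_\ell$ for the remaining edges.

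The main obstacle is the joint embedding in the final step. In contrast to $k=2$, every vertex of $M^<$ sits in an interval $J_i$ that interacts with $k-1$ other intervals via distinct blue pseudorandom graphs, so the embeddings of different $M^<_{ij}$'s cannot be decoupled. A naive union bound loses too much and forces $\varepsilon(k) \to 0$ too quickly, or even pushes it nonpositive. Circumventing this will likely require an absorption-style reservoir reserved inside each $I_\ell$, or a tailored martingale/concentration inequality for the random matching that retains useful joint structure across pairs; in either case, carefully tracking how the guaranteed pseudorandomness from the first step degrades with $k$ while still dominating the accumulating error terms in the embedding step is where the real difficulty lies.
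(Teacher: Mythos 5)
The statement you are addressing is a \emph{conjecture}, not a theorem: it is Rohatgi's conjecture, which the paper explicitly leaves open. The paper establishes only the case $k=2$ (via Theorem~\ref{thm-upperBound}, which gives $\varepsilon(2)\ge 1/4$) and states in the surrounding discussion that ``the conjecture is open for all cases with $k\ge 3$.'' There is therefore no proof in the paper to compare your sketch against, and your proposal itself does not constitute a proof: in your final paragraph you candidly identify a genuine gap in the joint embedding step (the interactions among the $\binom{k}{2}$ pseudorandom bipartite pieces cannot be decoupled, and a union bound drives $\varepsilon(k)$ to zero or below), and you do not resolve it --- you only list techniques (absorption reservoirs, tailored martingale concentration) that might help. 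That obstacle is precisely where the difficulty of the conjecture lies, so until it is closed, the statement remains unproved.

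It is also worth pointing out that even for the one case the paper does handle, $k=2$, your proposed route differs substantially from the paper's. The paper does not invoke dependent random choice, Thomason-style jumbledness, or an iterative density-increment argument on the host coloring. Instead it introduces the combinatorial parameter $L(\pi_{M^<})$ (the largest pattern the permutation shares with a translate of itself), proves the deterministic bound $r_<(M^<,K^<_3)\le 4n(\sqrt{n\ell}+1)$ for any ordered matching with $\chi_<(M^<)=2$ and $L(\pi_{M^<})\le\ell$ by a multi-thread scanning argument on the $N\times N$ color matrix (Theorem~\ref{thm-upperBoundStronger}), and then applies He and Kwan's bound $L(\pi)\le 3\sqrt n$ w.h.p.\ for a uniform random permutation (Lemma~\ref{lem-HeKwan}). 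Whether that scanning/shift machinery can be extended to $k\ge 3$ is itself an interesting question, but neither the paper nor your sketch settles it, and your sketch's departure from the scanning approach means you cannot simply lean on the $k=2$ theorem as a base case.
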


It follows from Theorem~\ref{thm-upperBound} that $\varepsilon(2) \geq 1/4$.
The conjecture is open for all cases with $k \geq 3$. 
Our results suggest that $\varepsilon(2) > \varepsilon(3)$ might hold.

Concerning the ordered matchings $M^<$ with interval chromatic number 2, even in this case the growth rate of $r_<(M^<,K^<_3)$ is not understood, so we pose the following weaker version of Problem~\ref{cflsProblem}.

\begin{conjecture}
There exists an $\epsilon > 0$ such that for any ordered matching $M^<$ on $n$ vertices with $\chi_<(M^<)=2$ we have $r_<(M^<,K^<_3) \in O(n^{2-\varepsilon})$.
\end{conjecture}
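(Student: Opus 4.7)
The plan is to attack the conjecture through a structure-versus-pseudorandomness dichotomy that applies to \emph{every} ordered matching $M^<$ with $\chi_<(M^<) = 2$, not merely a typical one. Any such matching on $n$ vertices is encoded by a permutation $\pi$ of $[m]$ (with $m = n/2$) via the edge set $\{(i, m+\pi(i)) : i \in [m]\}$, so the task becomes bounding the smallest $N$ such that every red-blue coloring of $K^<_N$ with no blue $K_3$ contains a red copy of $M^<$. Introduce a complexity parameter $\mu(\pi)$ equal to the length of the longest monotone subsequence of $\pi$, and a threshold $\mu_0 = n^\alpha$ for some $\alpha \in (1/2,1)$ to be chosen later. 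The argument splits into the structured case $\mu(\pi) \geq \mu_0$ and the jumbled case $\mu(\pi) < \mu_0$.

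In the structured case, $M^<$ contains a monotone sub-matching (either all parallel or all nested) of size at least $\mu_0$. Monotone matchings admit near-linear ordered Ramsey numbers against $K^<_3$ by a direct greedy embedding into any triangle-free red-blue coloring. One would use this sub-matching as an anchor: embed it first into a red pseudo-clique, whose existence in a triangle-free coloring is guaranteed by Kim-type density estimates, and then extend to the remaining edges using the red density of the neighborhoods of the anchor vertices. Tuning $\alpha$ should yield a bound $N = O(n^{2-\varepsilon})$ in this case.

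In the jumbled case, one would convert the absence of long monotone subsequences into a deterministic expansion statement asserting that for most pairs of intervals $A$ in the domain and $B$ in the codomain of $\pi$, the cardinality $|\pi(A) \cap B|$ is close to the expected value $|A|\,|B|/m$. With such an estimate in hand, one would mimic the proof of Theorem~\ref{thm-upperBound} by substituting the probabilistic facts about uniformly random permutations by these deterministic expansion bounds, thereby preserving a subquadratic exponent.

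The principal obstacle is the implication ``short longest monotone subsequence $\Rightarrow$ strong bipartite pseudorandomness'' needed for the jumbled case. This implication fails in full generality: there exist permutations whose longest monotone subsequence is as short as $\sqrt{m}$ (the Erd\H{o}s--Szekeres lower bound) yet which remain highly structured at intermediate scales, for example by nesting smaller scrambled blocks inside a fixed outer pattern. Closing this gap seems to require a decomposition theorem of regularity-lemma or container type for permutations, partitioning every $\pi$ into a bounded number of blocks each of which is either easy to embed directly or genuinely bipartite-pseudorandom. Producing such a decomposition, and then controlling the interaction between embeddings of different blocks inside a triangle-free host coloring without overspending vertices, appears to be the main conceptual hurdle and explains why the conjecture remains open even for $k = 2$.
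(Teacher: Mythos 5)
This statement is posed in the paper as an open \emph{conjecture}, not a theorem; the authors provide no proof of it, so there is nothing on the paper's side to compare your argument against. Your proposal is therefore being judged on its own merits, and you yourself correctly flag that it is not a complete proof.

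The dichotomy you set up does not close. In the structured case, having a long monotone sub-matching $M_0^< \subseteq M^<$ of size $\mu_0 = n^\alpha$ does not obviously help: it is true that monotone ordered matchings $M_0^<$ satisfy $r_<(M_0^<, K_3^<) = O(|M_0^<|)$, but the difficulty is that $M^<$ is not $M_0^<$ together with a few extra edges --- it is $M_0^<$ interleaved with $n - \mu_0$ further vertices whose pattern is arbitrary. Anchoring $M_0^<$ in a red ``pseudo-clique'' and then ``extending using red density of neighborhoods'' is exactly the step that would, if it worked, solve the general problem; no argument is given for why the extension stays within $O(n^{2-\varepsilon})$ vertices, and plausibly it cannot for adversarial colorings. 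In the jumbled case, the implication you need --- short longest monotone subsequence implies quasirandom interval-to-interval edge distribution --- is false. A standard counterexample is the permutation obtained by substituting a decreasing permutation of length $\sqrt{m}$ into each entry of an increasing permutation of length $\sqrt{m}$: it has longest monotone subsequence $O(\sqrt{m})$, yet there are many pairs of intervals $A, B$ of linear size with $|\pi(A) \cap B| = 0$, so the expansion estimate $|\pi(A)\cap B| \approx |A||B|/m$ fails catastrophically. You correctly identify this obstruction, but identifying the gap is not the same as filling it. The regularity/container-style decomposition you gesture at in the final paragraph is the actual problem, and without a statement of such a decomposition theorem --- much less a proof --- the proposal remains an outline of where one might look rather than an argument.

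A smaller issue: the parameter $L(\pi)$ that the paper's Theorem~\ref{thm-upperBoundStronger} actually uses is the maximum length of a \emph{shift} (a pattern that $\pi$ shares with a translate of itself), not the longest monotone subsequence. These are unrelated in general, so even in the jumbled case the reduction to the paper's multi-thread scanning argument would require controlling $L(\pi)$, not $\mu(\pi)$. Bounding $L(\pi)$ deterministically for a usefully broad class of $\pi$ is itself open; Lemma~\ref{lem-HeKwan} is a high-probability statement for random $\pi$ and says nothing about worst-case permutations.
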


In this paper, we considered the variant of this problem for random ordered matchings with interval chromatic number 2, but there is still a gap between our bounds.
It would be very interesting to close it.

\begin{problem}
\label{prob-random}
What is the growth rate of $r_<(M^<,K^<_3)$ for uniform random ordered matchings $M^<$ on $n$ vertices with $\chi_<(M^<)=2$?
\end{problem}

It follows from our results that the answer to Problem~\ref{prob-random} lies somewhere between $\Omega((n/\log{n})^{5/4})$ and $O(n^{7/4})$.
We do not know which of these bounds is closer to the truth.

\section{Proof of Theorem~\ref{jumbled_bipartite_lower_bound}}
\label{sec-bipartite}

Here, we show that for all positive integers $n$, if an ordered matching $M^<$ on $2n$ vertices with $\chi_<(M^<)=2$ is picked uniformly at random, then with high probability 
$r_<(M^<, K_3^<) \in \Omega\left(\frac{n}{\log n}\right)^{5/4}$.
The proof is carried out using a similar probabilistic argument used by Conlon, Fox, Lee, and Sudakov~\cite{conlon} and is based on the Lov\'{a}sz local lemma.

We use the following result about the edge-density between disjoint intervals in a random ordered matching with interval chromatic number 2.

\begin{lemma}
\label{lem-densityRandom}
Let $M^<$ be a uniform random ordered matching on $[2n]$ satisfying $\chi_<(M^<)=2$.
Then, asymptotically almost surely, $M^<$ contains an edge between any two intervals $I \subseteq [n]$ and $J \subseteq \{n+1,\dots,2n\}$, each of length at least $2\sqrt{n\log n}$, and at most $12s\sqrt{\log{n}/n}$ edges between any two disjoint intervals of lengths at most $2\sqrt{n\log n}$ and $s \geq 2\sqrt{n\log{n}}$, respectively.
\end{lemma}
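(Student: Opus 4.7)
The plan is to identify a uniformly random ordered matching $M^<$ on $[2n]$ with $\chi_<(M^<)=2$ with the matching $\{\{i,\,n+\pi(i)\}:i\in[n]\}$ induced by a uniformly random permutation $\pi$ of $[n]$. Under this identification, for any $I\subseteq[n]$ and $J=\{n+j:j\in J'\}$ with $J'\subseteq[n]$, the number $X_{I,J}$ of edges of $M^<$ between $I$ and $J$ equals $|\{i\in I:\pi(i)\in J'\}|$, and is hypergeometrically distributed with mean $\mu=|I||J'|/n$. Both parts of the lemma will then follow by applying standard tail bounds to $X_{I,J}$ and union-bounding over the relevant interval pairs.

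For the first assertion, suppose $|I|,|J|\geq 2\sqrt{n\log n}$, so $\mu\geq 4\log n$. Sampling $\pi$-images one at a time yields
\[
\Pr[X_{I,J}=0] \;=\; \binom{n-|J'|}{|I|}\Big/\binom{n}{|I|} \;\leq\; \Bigl(1-\tfrac{|J'|}{n}\Bigr)^{|I|} \;\leq\; e^{-|I||J'|/n} \;\leq\; n^{-4}.
\]
Since an edge between $I$ and $J$ forces an edge between any pair of supersets, it suffices to verify the claim for pairs with $|I|=|J|=\lceil 2\sqrt{n\log n}\,\rceil$; there are at most $n^2$ such pairs, so a union bound yields failure probability at most $n^{-2}=o(1)$.

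For the second assertion, fix disjoint intervals with one of length at most $2\sqrt{n\log n}$ and the other of length $s\geq 2\sqrt{n\log n}$, and set $t:=12s\sqrt{\log n/n}$, so that $\mu\leq 2s\sqrt{\log n/n}=t/6$. By Hoeffding's stochastic domination of the hypergeometric by the binomial together with the Chernoff bound $\Pr[X\geq t]\leq(e\mu/t)^{t}$, we obtain
\[
\Pr[X_{I,J}\geq t] \;\leq\; (e/6)^{t} \;\leq\; 2^{-t} \;\leq\; n^{-24},
\]
using that $t\geq 24\log n$. There are at most $O(n^4)$ such interval pairs, so a union bound gives failure probability $o(1)$, and one final union bound combines the two assertions. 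The only real bookkeeping point is arranging the constants so that $t$ is a comfortable multiplicative factor above $\mu$ and the geometric-decay tail bound then dominates the polynomial interval count; the constants $2$ and $12$ in the statement are precisely what make this go through.
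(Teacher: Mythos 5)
Your proof is correct and takes essentially the same route as the paper: both reduce to a uniform random permutation, bound $\Pr[\text{no edge}]$ by $e^{-|I||J|/n}$ for the first part, and for the second part bound the upper tail of the hypergeometric count by $(e\mu/t)^t$ (the paper derives this explicitly from a binomial-coefficient product, while you cite the standard Chernoff bound via Hoeffding's comparison with sampling with replacement, which gives literally the same expression). The only cosmetic differences are that the paper fixes interval sizes to get an $n^2$ union bound and separately handles the trivial regime $t>|I|$, whereas you use a cruder $O(n^4)$ count and let the Chernoff bound absorb that case; neither affects correctness.
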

\begin{proof}
For sets $A \subseteq [n]$ and $B \subseteq \{n+1,\dots,2n\}$ with $|A|=t=|B|$, the probability that $M^<$ has no edge between $A$ and $B$ is at most
\[\left(\frac{n-t}{n}\right)\left(\frac{n-t-1}{n-1}\right) \cdots \left(\frac{n-2t+1}{n-t+1}\right) \leq \left(\frac{n-t}{n}\right)^t \leq e^{-t^2/n}\]
where we used the inequalities $\frac{n-t-i}{n-i} < \frac{n-t}{n}$ for every $i>0$ and $1-x \leq e^{-x}$ for every $x \in \mathbb{R}$.
There are at most $n^2$ intervals $I \subseteq [n]$ and $J \subseteq \{n+1,\dots,2n\}$, each of size $t$, and thus the probability that there there are two such intervals with no edge of $M^<$ between them is at most $n^2e^{-t^2/n}$.
This probability goes to 0 with increasing $n$ for $t \geq \frac{3}{2}\sqrt{n\log n}$.
Thus, it suffices to take $t$ as an integer between $\frac{3}{2}\sqrt{n\log n}$ and $2\sqrt{n\log n}$.

Now, consider disjoint subsets $C$ and $D$ of $[2n]$ with $|C| = 2\sqrt{n\log n}$ and $|D|=s \geq 2\sqrt{n\log n}$.
Set $r = 12s\sqrt{\log{n}/n}$.
We show that asymptotically almost surely there are at most $r$ edges of $M^<$ between $C$ and $D$.
This is trivial for $r > 2\sqrt{n\log{n}}$ as there are always at most $|C|= 2\sqrt{n\log{n}}$ edges of $M^<$ between $C$ and $D$.
Thus, we assume $r \leq 2\sqrt{n\log n}$.
Then, the probability that there are $r$ edges of $M^<$ between $C$ and $D$ is at most
\begin{align*}
\binom{2\sqrt{n\log n}}{r}\left(\frac{s}{n}\right)\left(\frac{s-1}{n-1}\right)\cdots\left(\frac{s-r+1}{n-r+1}\right) &\leq \left(\frac{2es\sqrt{n\log n}}{rn}\right)^r \\
&\leq \left(\frac{6s\sqrt{\log n}}{r\sqrt{n}}\right)^r
\end{align*}
as the $i$th edge of such $r$ edges has the other vertex in $D$ with probability $\left(\frac{s-i+1}{n-i+1}\right)$.
The remaining edges can be assigned arbitrarily.
There are at most $n^2$ pairs of disjoint intervals $I$ and $J$ with $|I| = 2\sqrt{n\log n}$ and $|J|=s$ and thus the probability that there there are two such intervals with at least $r$ edges of~$M^<$ between them is at most $n^2\left(\frac{6s\sqrt{\log n}}{r\sqrt{n}}\right)^r$.
Since $s \geq 2\sqrt{n\log{n}}$ we then have $r \geq 24\log{n}$.
Thus, since also $\frac{6s\sqrt{\log n}}{r\sqrt{n}} \leq 1/2$ by the choice of $r$, the upper bound goes to zero with increasing $n$. \qed 
\end{proof}

We note that there is an explicit construction of an ordered matching $M^<_t$ on $2t^2$ vertices that satisfies a similar statement with intervals of size only $t$; see Section~\ref{sec-k-partite}.

The key ingredient in our probabilistic argument is the famous Lov\'{a}sz local lemma, see~\cite{alon16} for example.
We now recall its statement.
 
\begin{lemma}[The Lov\'{a}sz local lemma] 
\label{lem-LLL}
Let $\{A_1, \ldots, A_n\}$ be a finite set of events in a probability space. 
A directed graph $D=(V,E)$ is the \emph{dependency graph} of $A_1,\dots,A_n$ if each  event $A_i$ is mutually independent of all the events from $\{A_j \colon (i,j) \notin E\}$.
Let $x_1,\dots,x_n$ be real numbers such that $0 \leq x_i < 1$ and $\Pr[A_i] \leq x_i \prod_{(i,j) \in E} (1-x_j)$ for every $i \in [n]$.
Then, 
\[\Pr\left[\overline{A_1} \cap \cdots \cap \overline{A_n} \right] \geq \prod_{i=1}^n (1-x_i).\] 
In particular, the probability that none of the events $A_1,\dots,A_n$ hold is positive.
\end{lemma}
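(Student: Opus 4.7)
The plan is to follow the standard inductive argument. I will prove by induction on $|S|$ the following strengthening: for every $i \in [n]$ and every $S \subseteq [n] \setminus \{i\}$ for which the conditioning event has positive probability,
\[\Pr\Bigl[A_i \,\Big|\, \bigcap_{j \in S}\overline{A_j}\Bigr] \leq x_i.\]
Once this is established, the chain rule $\Pr[\overline{A_1} \cap \cdots \cap \overline{A_n}] = \prod_{i=1}^n \Pr[\overline{A_i} \mid \overline{A_1} \cap \cdots \cap \overline{A_{i-1}}]$ immediately yields the claimed lower bound $\prod_{i=1}^n (1 - x_i)$, and positivity of this product (since each $x_i < 1$) gives the final ``in particular'' statement.

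For the base case $S = \emptyset$, the lemma's hypothesis directly gives $\Pr[A_i] \leq x_i \prod_{(i,j) \in E}(1 - x_j) \leq x_i$, using that each factor lies in $(0,1]$. For the inductive step, I will split $S$ into $S_1 = \{j \in S \colon (i,j) \in E\}$ and $S_2 = S \setminus S_1$, separating the part of $S$ that $A_i$ may depend on from the part it is independent of. Writing
\[\Pr\Bigl[A_i \,\Big|\, \bigcap_{j \in S}\overline{A_j}\Bigr] = \frac{\Pr\Bigl[A_i \cap \bigcap_{j \in S_1}\overline{A_j} \,\Big|\, \bigcap_{j \in S_2}\overline{A_j}\Bigr]}{\Pr\Bigl[\bigcap_{j \in S_1}\overline{A_j} \,\Big|\, \bigcap_{j \in S_2}\overline{A_j}\Bigr]},\]
I will bound the numerator above by $\Pr[A_i \mid \bigcap_{j \in S_2}\overline{A_j}]$, which equals $\Pr[A_i] \leq x_i \prod_{(i,j) \in E}(1 - x_j)$ by the mutual independence of $A_i$ from the events indexed by $S_2$ (none of which are neighbors of $i$ in $D$). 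For the denominator, I will enumerate $S_1 = \{j_1, \ldots, j_r\}$ and apply the chain rule, obtaining a product of $r$ factors of the form $\Pr[\overline{A_{j_\ell}} \mid \bigcap_{k < \ell}\overline{A_{j_k}} \cap \bigcap_{j \in S_2}\overline{A_j}]$; each factor is at least $1 - x_{j_\ell}$ by the inductive hypothesis applied to a conditioning set of size at most $|S| - 1$. The ratio is therefore at most $x_i \prod_{(i,j) \in E}(1 - x_j) / \prod_{j \in S_1}(1 - x_j) \leq x_i$, since $S_1$ is contained in the out-neighborhood of $i$ in $D$ and every $1 - x_j$ lies in $(0,1]$.

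The only delicate point, rather than a real obstacle, is ensuring that every conditioning event has strictly positive probability so that each conditional probability used above is well-defined. This is automatic from the induction itself: the inductive hypothesis gives $\Pr[\overline{A_{j_\ell}} \mid \cdots] \geq 1 - x_{j_\ell} > 0$ at each intermediate step, so no conditioning event ever collapses to measure zero, and the argument closes cleanly.
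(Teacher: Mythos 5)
The paper does not prove this lemma; it states the Lov\'asz local lemma as a known result and cites Alon and Spencer for a proof. Your argument is the standard textbook proof (essentially the one in the cited reference) and it is correct: the induction on the size of the conditioning set, the split of $S$ into out-neighbors $S_1$ and non-neighbors $S_2$ of $i$, the use of mutual independence to reduce the numerator to $\Pr[A_i]$, the chain-rule lower bound on the denominator via the inductive hypothesis, and the final telescoping via the chain rule are all handled properly. You also correctly address the one delicate point that textbooks sometimes gloss over, namely that positivity of every conditioning event falls out of the induction itself since each conditional probability of a complement is bounded below by $1-x_j>0$, so no conditioning event degenerates. Nothing is missing.
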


We apply the Lov\'{a}sz local lemma to prove the following auxiliary result, which is also used in the proof of Theorem~\ref{jumbled_k-partite_lower_bound}.
For positive integers $r,s$, we use $K^<_{r,s}$ to denote the ordered complete bipartite graph where the color classes of sizes $r$ and $s$ form consecutive intervals in this order.
\begin{lemma}
\label{lem-lovaszApplication}
Let $\alpha, \beta, \gamma >0$ and $\delta \geq 0$ be real numbers satisfying the following three inequalities:
$\alpha + \beta + \gamma - \delta \leq \frac{3}{2}$, $\beta \leq 2 \gamma$, and $\alpha +\gamma \leq 1$.
For a sufficiently large integer $n$, let $\mathcal{G}$ be a family of ordered graphs, each on $n^\beta$ vertices and with $40n^{\frac{3}{2}-\alpha+\delta} \log n$ edges, and assume that $|\mathcal{G}| \leq e^{n^{\beta}\log n}$.
Then, there is a red-blue coloring $\chi$ of the edges of $K^<_{n^\beta}$ such that the following three conditions are satisfied:

\begin{enumerate}[label=(\alph*)]
\item there is no blue triangle in $\chi$,
\item there is no red copy of any ordered graph from $\mathcal{G}$ in $\chi$, and
\item there is no red copy of $K^<_{10n^{1-\alpha}\log n,10n^{1-\alpha}\log n}$ in $\chi$.
\end{enumerate}
\end{lemma}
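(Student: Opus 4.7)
The plan is to apply the Lovász local lemma (Lemma~\ref{lem-LLL}) to the random red-blue coloring of $K^<_{n^\beta}$ in which each edge is independently coloured red with probability $p := 1 - cn^{-\gamma}$ and blue with probability $q := cn^{-\gamma}$, for an absolute constant $c \in (0,1)$. I consider three families of bad events: for each triangle $T$ in $K^<_{n^\beta}$, the event $B_T$ that all three edges of $T$ are blue, with $\Pr[B_T] = q^3$; for each $H\in\mathcal{G}$, which since $|V(H)| = n^\beta$ admits a unique order-preserving copy in $K^<_{n^\beta}$, the event $R_H$ that this copy is entirely red, with $\Pr[R_H] = p^m \leq e^{-mq}$, where $m := 40\,n^{3/2-\alpha+\delta}\log n$; and for each copy of $K^<_{s,s}$ in $K^<_{n^\beta}$ with $s := 10\,n^{1-\alpha}\log n$, the event $R_B$ that all $s^2$ edges of this copy are red, with $\Pr[R_B] = p^{s^2} \leq e^{-s^2 q}$. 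Two events are mutually independent whenever the underlying edge-sets are disjoint, which bounds the relevant dependency degrees by $O(n^\beta)$ times the edge counts $3$, $m$, and $s^2$.

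I take as LLL weights $x_T := 2q^3$, and enlarge the red weights by exponential factors to absorb the cross-dependence from triangle events, namely $x_H := 2 e^{4q^3 m n^\beta} p^m$ and $x_B := 2 e^{4q^3 s^2 n^\beta} p^{s^2}$, using $(1-x_T)^k \geq e^{-2x_T k}$ on the factors $(1-x_T)^{m n^\beta}$ and $(1-x_T)^{s^2 n^\beta}$ that appear in the LLL inequalities for $R_H$ and $R_B$. Each of the three hypotheses of the lemma is then used exactly once. The hypothesis $\beta\leq 2\gamma$ gives $q^2 n^\beta = c^2 n^{\beta-2\gamma} \leq c^2$, which for $c$ small enough makes the compensating exponentials dominated by the main terms $p^m$ and $p^{s^2}$, so that $x_H$ and $x_B$ are both of the form $e^{-\Theta(mq)}$ respectively $e^{-\Theta(s^2 q)}$ and in particular stay below $1$. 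The hypothesis $\alpha+\beta+\gamma-\delta \leq 3/2$ gives $mq \geq 40c\,n^\beta\log n$, which combined with the previous bound forces $x_H\cdot|\mathcal{G}| \to 0$, so the factor $(1-x_H)^{|\mathcal{G}|}$ appearing in every LLL inequality stays bounded away from zero. The hypothesis $\alpha+\gamma\leq 1$ gives $s^2 q \geq 4\beta s\log n$ for $c$ chosen appropriately, which combined with the previous bounds forces $x_B\cdot s^2 n^{\beta(2s-2)} \to 0$ and so $(1-x_B)^{s^2 n^{\beta(2s-2)}}$ is bounded away from zero. The "own-type" triangle contribution $q^3\cdot 3n^\beta\to 0$ follows from $\beta\leq 2\gamma$, making $(1-x_T)^{3 n^\beta}$ equal to $1-o(1)$, and Lemma~\ref{lem-LLL} then outputs, with positive probability, a coloring in which none of the three bad families occurs, satisfying (a), (b), and (c).

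The main obstacle is the LLL inequality for $R_H$: the cross-dependence on the triangle events produces a factor $(1-x_T)^{m n^\beta}$ that is not automatically close to $1$, and to compensate for it one must enlarge $x_H$ by an exponential of order $e^{q^3 m n^\beta}$ while still keeping $x_H<1$. The only way this can succeed is to ensure that $q^3 m n^\beta$ is dominated by $mq$, that is $q^2 n^\beta \leq 1$, which is exactly the slightly mysterious hypothesis $\beta\leq 2\gamma$. Choosing the constant $c$ in $q=cn^{-\gamma}$ so that $c^2$ is safely below the threshold implicit in $q^2 n^\beta \leq 1$, while also being large enough so that the thresholds in the $R_H$ and $R_B$ verifications hold (the prefactors $40$ and $10$ in the definitions of $m$ and $s$ supply enough slack for such a $c$ to exist in the regimes of interest), is the delicate point; once this is done, the remaining verifications of the three LLL inequalities reduce to a routine computation.
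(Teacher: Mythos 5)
Your proof is correct and follows essentially the same approach as the paper: you use the same random coloring with blue probability $\Theta(n^{-\gamma})$, the same three families of bad events, and the Lovász local lemma with each of the three hypotheses playing the same role as in the paper (controlling the triangle-triangle dependence, the $\mathcal{G}$-triangle dependence, and the $K_{s,s}$-triangle dependence, respectively). The only cosmetic difference is that you derive the LLL weights by multiplying the event probabilities by absorption factors $e^{\Theta(q^3\cdot(\text{edge count})\cdot n^\beta)}$, whereas the paper fixes simple explicit weights $x=\tfrac{1}{4n^{3\gamma}}$, $y=e^{-2n^\beta\log n}$, $z=e^{-21n^{1-\alpha}\log^2 n}$ up front and then checks the inequalities directly; both work, though your claim that a valid $c$ exists is tighter than you suggest and really relies on replacing $(1-x_T)^k\ge e^{-2x_Tk}$ by the asymptotically sharp $(1-x_T)^k\ge e^{-(1+o(1))x_Tk}$ and on the gap between $\log$ and $\ln$ in the bound on $|I_R|$.
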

\begin{proof}
We color each edge of $K^<_{n^\beta}$ independently at random blue with probability $\frac{1}{2n^{\gamma}}$ and red with probability $1- \frac{1}{2n^{\gamma}}$.
Let $P_i$ be the events corresponding to blue triangles in our random coloring, let $Q_i$ be the events corresponding to red ordered graphs from $\mathcal{G}$, and $R_i$ the events corresponding to the red ordered complete bipartite graphs as in the statement of the lemma. 
We denote the index sets of the events $P_i$, $ Q_i$, and $R_i$ by $I_P$, $I_Q$, and $I_R$, respectively.
Clearly, we have $|I_P| \leq \binom{n^\beta}{3} \leq n^{3\beta}$ and $|I_Q| = |\mathcal{G}|\leq e^{n^{\beta }\log n}$.
Since $n$ is sufficiently large, we obtain
\begin{align*}
|I_R| &\leq \binom{n^\beta}{20n^{1-\alpha} \log n} \leq \left(\frac{en^\beta}{20n^{1-\alpha}\log n}\right)^{20n^{1-\alpha} \log n} \leq (n^{\alpha + \beta -1})^{20n^{1-\alpha} \log n} \\
&\leq e^{20n^{1-\alpha} \log^2 n}
\end{align*}
as $\alpha+\beta-1\leq 1$ follows from our assumptions $\beta \leq 2\gamma$ and $\alpha+\gamma\leq 1$.

We apply the Lov\'{a}sz local lemma (Lemma~\ref{lem-LLL}) with the events $P_i$, $Q_i$, and $R_i$.
It suffices to verify the conditions of the lemma as then it follows that the probability that none of these events hold is positive. 
That is, there is a coloring $\chi$ satisfying the statement of Lemma~\ref{lem-lovaszApplication}.

We choose $x=\frac{1}{4n^{3\gamma}}$, $y=e^{-2n^{\beta} \log n}$, and $z=e^{-21n^{1-\alpha} \log^2 n}$.
It follows from the choice of $y$ and $z$ and our estimates on $I_Q$ and $I_R$ that $y|I_Q| \in o(1)$ and $z|I_R| \in o(1)$. 
We now verify the conditions of Lemma~\ref{lem-LLL}.

\begin{enumerate}[label=(\roman*)]
\item Events $P_i$:

Each event $P_i$ depends on exactly $3n^\beta$ events $P_j$ and on at most $|I_Q|$ events $Q_j$ and on at most $|I_R|$ events $R_j$.
Thus,
\begin{align*}
x & \prod_{j \in I_P, j \sim i} (1-x) \prod_{j \in I_Q, j \sim i} (1-y) \prod_{j \in I_R, j \sim i} (1-z) \\
& =(1-o(1)) \cdot xe^{-3xn^\beta}e^{-y|I_Q|}e^{-z|I_R|} = (1-o(1)) \cdot \frac{1}{4n^{3\gamma}}e^{-\frac{3}{4}n^{\beta-3\gamma}} \\
&\geq \frac{1}{8n^{3\gamma}}=\Pr[P_i].
\end{align*}
The last inequality holds for a sufficiently large $n$ if and only if $\beta < 3\gamma$, which follows from our stronger assumption $\beta \leq 2\gamma$. 

\item Events $Q_i$:

Every ordered graph corresponding to the event $Q_i$ contains $40n^{\frac{3}{2}-\alpha+\delta} \log n$ edges and thus $Q_i$ depends on at most $40n^{\frac{3}{2}-\alpha+ \beta + \delta} \log n$ events $P_j$. 
It then follows that 
\begin{align*}
y&\prod_{j \in I_P, j \sim i} (1-x) \prod_{j \in I_Q, j \sim i} (1-y) \prod_{j \in I_R, j \sim i} (1-z) \\
&=(1-o(1)) \cdot ye^{-x(40n^{\frac{3}{2}-\alpha+\beta+\delta} \log n)}e^{-y|I_Q|}e^{-z|I_R|} \\
&= (1-o(1))\cdot e^{-2n^{\beta} \log n} e^{-10n^{\frac{3}{2}-\alpha+\beta-3\gamma+\delta}\log n}.
\end{align*}
We want the last expression to be at least $\Pr[Q_i]=(1-\frac{1}{2n^{\gamma}})^{40n^{\frac{3}{2}-\alpha+\delta} \log n}$ which is at most $e^{-20n^{\frac{3}{2}-\alpha-\gamma+\delta}\log n}$.
Therefore, it suffices to show that 
\[(1-o(1)) \cdot e^{-2n^{\beta} \log n} e^{-10n^{\frac{3}{2}-\alpha+\beta-3\gamma+\delta} \log n} \geq e^{-20n^{\frac{3}{2}-\alpha-\gamma+\delta}\log n}.\]
This is true if
\[2n^{\beta}  + 10n^{\frac{3}{2}-\alpha+\beta-3\gamma+\delta} \leq 20n^{\frac{3}{2}-\alpha-\gamma+\delta}\]
and the right hand side grows faster than the left one to beat the $(1-o(1))$-term above.
This follows from our assumptions $\alpha + \beta + \gamma-\delta \leq \frac{3}{2}$ and $\beta \leq 2\gamma$ as then $n^\beta \leq n^{\frac{3}{2}-\alpha-\gamma+\delta}$ and also $n^{\frac{3}{2}-\alpha+\beta-3\gamma+\delta} \leq n^{\frac{3}{2}-\alpha-\gamma+\delta}$.

\item
Events $R_i$:

Every ordered complete bipartite graph corresponding to the event $R_i$ contains $(10n^{1-\alpha} \log n)^2 = 100n^{2-2\alpha} \log^2 n$ edges and thus $R_i$ depends on at most $100n^{2-2\alpha+\beta} \log^2 n$ events $P_i$.
It follows that
\begin{align*}
z&\prod_{j \in I_P, j \sim i} (1-x) \prod_{j \in I_Q, j \sim i} (1-y) \prod_{j \in I_R, j \sim i} (1-z) \\
&=  
 (1-o(1)) \cdot ze^{-x(100n^{2-2\alpha+\beta} \log^2 n)}e^{-y|I_Q|}e^{-z|I_R|} \\
&= (1-o(1)) \cdot e^{-21n^{1-\alpha} \log^2 n} e^{-25n^{2-2\alpha+\beta-3\gamma}\log^2 n}.
\end{align*}

We want the last expression to be at least $\Pr[R_i]=(1-\frac{1}{2n^{\gamma}})^{100n^{2-2\alpha} \log^2 n}$, which is at most $e^{-50n^{2-2\alpha-\gamma}\log^2 n}$. 
That is, it suffices to check that
\[(1-o(1)) \cdot e^{-21n^{1-\alpha} \log^2 n} e^{-25n^{2-2\alpha+\beta-3\gamma}\log^2 n} \geq e^{-50n^{2-2\alpha-\gamma}\log^2 n}.\]
This is true if
\[21n^{1-\alpha}  + 25n^{2-2\alpha+\beta-3\gamma} \leq 50n^{2-2\alpha-\gamma}\]
and the right hand side grows faster than the left one to beat the $(1-o(1))$-term above.
This follows from our assumptions $\alpha + \gamma \leq 1$ and $\beta \leq 2\gamma$ as then $n^{1-\alpha} \leq n^{2-2\alpha-\gamma}$ and $n^{2-2\alpha+\beta-3\gamma} \leq  n^{2-2\alpha-\gamma}$.
\end{enumerate}

Altogether, all conditions of Lemma~\ref{lem-LLL} are satisfied and we obtain the desired coloring $\chi$.  \qed
\end{proof}

We now proceed with the proof of Theorem~\ref{jumbled_bipartite_lower_bound}.
The approach is similar to the one used by Conlon, Fox, Lee, and Sudakov~\cite{conlon}.

\begin{proof}[Proof of Theorem~\ref{jumbled_bipartite_lower_bound}]
Let $M^<$ be a random ordered matching with $\chi_<(M^<)=2$ on $m = 800n\log n$ vertices.
We will prove that 
$n^{5/4}\le r_{<}(M^<, K^<_3)$ for $n$ sufficiently large. 
We choose $\alpha=\frac{3}{4}$, $\beta=\frac{1}{2}$, and $\gamma=\frac{1}{4}$.
Note that this choice of parameters satisfies the conditions in the statement of Lemma~\ref{lem-lovaszApplication} with $\delta=0$.

We set $N=n^{\alpha + \beta} = n^{5/4}$ and we partition $[N]$ into consecutive intervals $V_{1},\dots,V_{n^{\beta}}$, each
of length $n^{\alpha}$.
Let $\Phi$ be the set of injective embeddings of $M^<$ into $[N]$ that respect the vertex ordering of $M^<$.
For $\phi \in \Phi$, let $G^<(\phi)$ be the ordered graph on the vertex set $[n^{\beta}]$ with an edge between $i$ and
$j$ if and only if there is an edge of $M^<$ between $\phi^{-1}(V_{i})$ and $\phi^{-1}(V_{j})$.
Let 
\[
\mathcal{H}=\{G^<(\phi)\,:\,\phi\in\Phi,\, |E(G^<(\phi))|\ge40n^{\frac{3}{2}-\alpha}\log n\}.
\]

Since the mappings in $\Phi$ respect the order of the vertices of $M^<$, any ordered graph $G^<(\phi)$
is determined by the last vertex in $\phi^{-1}(V_{i})$ for every
$i\in[n^{\beta}]$.
Therefore, for $n$ sufficiently large, we obtain
\[
|\mathcal{H}|\le \binom{m+n^\beta}{n^\beta} \leq \left(\frac{e(m+n^{\beta})}{n^{\beta}}\right)^{n^\beta} = \left(e(800n^{1-\beta}\log n+1)\right)^{n^{\beta}}\le e^{n^{\beta}\log n}.
\]
Let $\mathcal{G}$ be the class of ordered graphs such that for every $H^< \in \mathcal{H}$ there is $G^<\in \mathcal{G}$ with exactly $40n^{\frac{3}{2}-\alpha}\log{n}$ edges such that $G^<$ is an ordered subgraph of $H^<$.
Note that we can choose $\mathcal{G}$ so that $|\mathcal{G}| \leq |\mathcal{H}|$.

Applying Lemma~\ref{lem-lovaszApplication} to $\mathcal{G}$ with our choice of $\alpha$, $\beta$, $\gamma$, and $\delta$, we obtain a red-blue coloring $\chi'$ of the edges of $K^<_{n^\beta}$ that avoids a blue triangle, a red copy of any ordered graph from $\mathcal{G}$, and a red copy of $K^<_{10n^{1-\alpha} \log n,10n^{1-\alpha} \log n}$.

Let $\chi$ be the red-blue coloring of the edges of the ordered complete graph on $[N]$ where we color all edges between
$V_{i}$ and $V_{j}$ with color $\chi'(i,j)$ for all $i,j\in[n^{\beta}]$.
We color all edges within the sets $V_{i}$ red.
Note that $\chi$ contains no blue triangle, since $\chi'$ does not contain
a blue triangle.

Suppose for contradiction that for some $\phi\in\Phi$, the ordered matching $\phi(M^<)$ is a red copy of $M^<$ in $\chi$.
We use $P_1$ and $P_2$ to denote the left and the right color class of $\phi(M^<)$, respectively, each of size $m/2=400n\log n$. 
Let $W_{i}=V(\phi(M^<))\cap V_{i}$ for each $i$ and let $S\subseteq[n^{\beta}]$ be the set of indices $i$ for which $|W_{i}|\le 2\sqrt{m \log{m}}$.
We set $L=[n^{\beta}]\setminus S$. 

By Lemma~\ref{lem-densityRandom}, for any pair of indices $i,j\in L$ with $W_i \subseteq P_1$ and $W_j \subseteq P_2$, there is an edge of $\phi(M^<)$ between $W_{i}$ and $W_{j}$ since $|W_{i}|,|W_{j}|> 2\sqrt{m\log{m}}$ and $M^<$ is a random ordered matching with $\chi_<(M^<)=2$. 
Then, $\chi'(i,j)$ is red as all edges of $\phi(M^<)$ are red in $\chi$.
Thus, if there are at least $10n^{1-\alpha} \log n$ sets $W_i$ with $i \in L$ inside each of the two color classes $P_1$ and $P_2$ of $\phi(M^<)$, then we have a red copy of $K_{10n^{1-\alpha} \log n,10n^{1-\alpha} \log n}^<$ in $\chi'$.
This is impossible by the choice of~$\chi'$.

Hence, one of the color classes of $\phi(M^<)$ contains less than $10n^{1-\alpha} \log n$ sets $W_i$ with $i \in L$.
By symmetry, we can assume that it is the color class $P_1$.
Since the size of any set $W_i$ is at most $|V_i| = n^\alpha$, each set $W_i$ is incident to at most $n^\alpha$ edges of the ordered matching $\phi(M^<)$.
Overall, all sets $W_i  \subseteq P_1$ with $i \in L$ are incident to at most $10n\log n$ edges of $\phi(M^<)$.
Therefore, there are at least $390n\log n$ edges of $\phi(M^<)$ incident to sets $W_i \subseteq P_1$ with $i \in S$. 

Consider $W_i \subseteq P_1 $ and $W_j \subseteq P_2$ such that $i \in S$.
We recall that $|W_i| \leq 2\sqrt{m\log{m}}$ and $|W_j| \leq n^\alpha$.
By Lemma~\ref{lem-densityRandom}, there are at most $12n^\alpha \sqrt{\log{m}/m} \leq n^{\alpha-1/2}$ edges of $\phi(M^<)$ between $W_i$ and $W_j$ for $n$ sufficiently large. 
Since there are at least $390n \log n$ edges of $\phi(M^<)$ incident to sets $W_i \subseteq P_1$ with $i \in S$, there are at least 
\[\frac{390n \log n}{n^{\alpha-1/2}}=390n^{\frac{3}{2}-\alpha} \log n > 40n^{\frac{3}{2}-\alpha} \log n\]
red edges in the coloring $\chi'$.
This implies that $G^<(\phi) \in \mathcal{H}$. However, $G^<(\phi)$ has all edges red in the coloring $\chi'$ which contradicts the choice of $\chi'$.

Altogether, there is no red copy of $M^<$ and no blue copy of $K^<_3$ in $\chi$ and thus $r_<(M^<,K^<_3) > N=n^{5/4}$. \qed
\end{proof}

\section{Proof of Theorem~\ref{jumbled_k-partite_lower_bound}}
\label{sec-k-partite}

We show that, for every integer $k \geq 3$ and for all positive integers $n$, there exists an ordered matching $M^<$ on $2n$ vertices with $\chi_<(M^<) =k$ satisfying
$r_<(M^<, K_3^<) \in \Omega\left(\frac{n}{\log n}\right)^{4/3}$.

First, we construct the following auxiliary ordered matching $M^<_t$ with interval chromatic number 2.
For a positive integer $t$, let $[2t^2]$ be the vertex set of $M^<_t$.
We partition the set $[t^2]$ into $t$ consecutive intervals $I_1,\dots,I_t$, each of size $t$ and, similarly, let $J_1,\dots,J_t$ be the partition of the set $\{t^2+1,\dots,2t^2\}$ into $t$ consecutive intervals, each of size $t$.
Then, for all distinct integers $i$ and $j$ with $1 \leq i, j \leq t$ we put an edge between the $j$th vertex of $I_i$ and the $i$th vertex of~$J_j$.
Note that there is exactly one edge between each $I_i$ and $J_j$.
The ordered matching $M^<_t$ then satisfies the following properties.

\begin{observation}
\label{lem-jumbledBipartite}
There is at least one edge of~$M^<_t$ between any two intervals $I \subseteq \cup_{i=1}^tI_i$ and $J \subseteq \cup_{j=1}^tJ_j$, each of length at least $2t$.
\end{observation}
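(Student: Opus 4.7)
My plan is a case analysis based on which blocks $I_i$ and $J_j$ are fully contained in $I$ and $J$. I set $A = \{i \in [t] : I_i \subseteq I\}$ and $B = \{j \in [t] : J_j \subseteq J\}$.

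First I would verify that $A$ and $B$ are non-empty. If $I$ meets only two of the blocks $I_k$, then since each intersection has at most $t$ vertices and they sum to $|I| \geq 2t$, both are fully contained; if $I$ meets three or more blocks, any interior block is fully contained. The analogous argument gives $|B| \geq 1$.

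Second, I would observe that whenever there exist $i \in A$ and $j \in B$ with $i \neq j$, the proof finishes immediately: by definition of $M^<_t$ there is an edge between the $j$-th vertex of $I_i$ and the $i$-th vertex of $J_j$, and both endpoints lie in $I \times J$. This resolves every situation except the one where $A = B = \{i\}$ for a single common index $i$.

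The remaining case $A = B = \{i\}$ is the crux. Here $I$ fully contains $I_i$, a suffix of $I_{i-1}$ of some length $a_- \in \{0, \ldots, t-1\}$, and a prefix of $I_{i+1}$ of some length $a_+ \in \{0, \ldots, t-1\}$. The identity $|I| = a_- + t + a_+$ together with $|I| \geq 2t$ yields $a_- + a_+ \geq t$, which in turn forces $i \in \{2, \ldots, t-1\}$ (at the boundaries $i \in \{1,t\}$ one of $a_{\pm}$ is undefined and the other would have to be $\geq t$, contradicting the upper bound). I would then focus on the two candidate edges of $M^<_t$ that hit the known fully contained block $J_i \subseteq J$: the edge from the $i$-th vertex of $I_{i-1}$ to the $(i-1)$-th vertex of $J_i$, and the edge from the $i$-th vertex of $I_{i+1}$ to the $(i+1)$-th vertex of $J_i$. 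Both right endpoints already lie in $J$, so it suffices to push one of the two left endpoints into $I$. The first lies in $I$ iff $a_- \geq t-i+1$ and the second iff $a_+ \geq i$; if both failed, one would get $a_- + a_+ \leq (t-i)+(i-1) = t-1$, contradicting $a_- + a_+ \geq t$.

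The main obstacle is precisely this last case: because $M^<_t$ carries no edge between $I_i$ and $J_i$ (the construction forbids $i = j$), the direct argument used when $A \neq \{i\}$ or $B \neq \{i\}$ breaks down, and one has to exploit the additional vertices $I$ picks up in the neighbouring blocks $I_{i\pm 1}$. The counting bound $a_- + a_+ \geq t$ is exactly the ingredient that forces at least one of the two candidate edges to survive.
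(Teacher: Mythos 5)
Your proof is correct, and it is in fact \emph{more} careful than the paper's own one-sentence argument. The paper simply states that $I$ contains some $I_i$ and $J$ contains some $J_j$, ``so there is an edge between $I$ and $J$'' --- which quietly presumes one can choose $i\neq j$. But as you correctly observe, an interval of length exactly $2t$ that is misaligned with the block boundaries fully contains only a \emph{single} block, so the situation $A=B=\{i\}$ is genuinely possible (e.g.\ $t=4$, $I=\{2,\dots,11\}$, $J=\{t^2+2,\dots,t^2+11\}$ both single out index $2$), and in that case there is no edge between $I_i$ and $J_i$ to invoke. Your extra step --- writing $|I|=a_-+t+a_+$, deducing $a_-+a_+\geq t$, and then showing that if both the edge from the $i$th vertex of $I_{i-1}$ to $J_i$ and the edge from the $i$th vertex of $I_{i+1}$ to $J_i$ avoided $I$ one would get $a_-+a_+\leq(t-i)+(i-1)=t-1$ --- is exactly the counting argument needed to close this case, and it is valid (including the boundary remark that $i\in\{2,\dots,t-1\}$ is forced). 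So the observation is true and your proof is complete; the paper's proof, while pointing at the right mechanism for the generic case, does not address the coincident-index case, and your argument supplies the missing piece.
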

\begin{proof}
The interval $I$ contains some interval $I_i$ and $J$ contains some interval $J_j$, so there is an edge of~$M^<$ between $I$ and $J$.
\qed
\end{proof}

For positive integers $k \geq 3$ and $t$, we now construct the ordered matching $M^<_{k,t}$ on $m = k(k-1)t^2$ vertices that is used in the proof of Theorem~\ref{jumbled_k-partite_lower_bound}.
The main idea is to define $M^<_{k,t}$ as an intertwined union of the ordered matchings $M^<_t$; see Figure~\ref{fig-jumbled} for an illustration with $k=3$ and $t=3$.

The vertex set $[m]$ of $M^<_{k,t}$ is partitioned into consecutive intervals $P_1,\dots,P_k$, each of size $m/k = (k-1)t^2$.
For every $i \in [k]$, the interval $P_i$ is partitioned into consecutive intervals $B_{i,1},\dots,B_{i,(k-1)t}$, each of size $t$.
We call each interval $B_{i,j}$ a \emph{block} of~$M^<_{k,t}$.
For every $i \in [k]$ and $j \in \{0,1,\dots,k-2\}$, let $a_j$ be the $(j+1)$st smallest element of $[k] \setminus \{i\}$ and let $C_{i,a_j}$ be the set of vertices that is the union of the blocks $B_{i,\ell}$ where $\ell$ is congruent to $j$ modulo $k-1$; see Figure~\ref{fig-jumbled}.
We call each set $C_{i,a_j}$ a \emph{superblock} of $M^<_{k,t}$.
Note that the size of each superblock is $t^2$.
We now place the edges so that any pair $C_{i,j}$ and $C_{j,i}$ of superblocks induces a copy $M^<(i,j)$ of the ordered matching $M^<_t$. 

\begin{figure}[ht]
    \centering
    \includegraphics{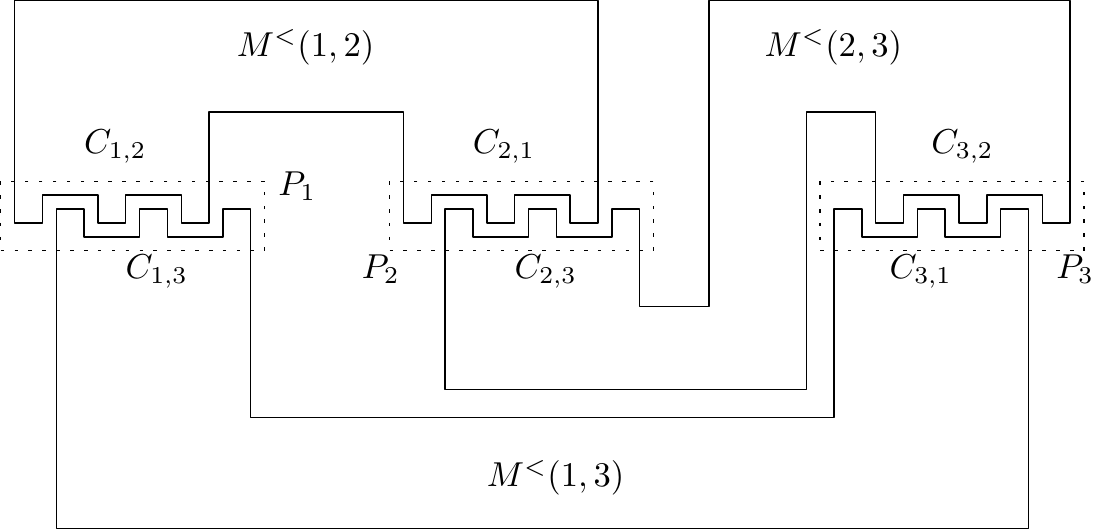}
    \caption{An illustration of the ordered matching $M^<_{k,t}$ for $k=3$ and $t=3$.}
    \label{fig-jumbled}
\end{figure}

Observe that $\chi_<(M^<_{k,t})=k$ as the sets $P_1,\dots,P_k$ form the color classes of~$M^<_{k,t}$.
We now state the key properties of the ordered matching $M^<_{k,t}$.

\begin{lemma}
\label{lem-jumbled}
Let $i,j \in [k]$ be two distinct integers.
For any pair of intervals $I \subseteq P_i$ and $J \subseteq P_j$, each of length at least $2kt$, there is an edge of $M^<_{k,t}$ between $I$ and~$J$.
Moreover, there are at most $(2k+1)^2$ edges between any two disjoint intervals $I' \subseteq P_i$ and $J' \subseteq P_j$, each of size at most $2kt$. 
\end{lemma}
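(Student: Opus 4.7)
The strategy is to reduce both claims to properties of the building block $M^<_t$ via Observation~\ref{lem-jumbledBipartite}, relying on the fact that all edges of $M^<_{k,t}$ between $P_i$ and $P_j$ belong to the copy $M^<(i,j)$ of $M^<_t$ placed on $C_{i,j}\cup C_{j,i}$. I will therefore translate hypotheses on intervals in $P_i$ and $P_j$ into hypotheses on their images under the natural identification of $C_{i,j}$ and $C_{j,i}$ with the two color classes of $M^<_t$.

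For the existence claim, I will first note that an interval $I\subseteq P_i$ of length at least $2kt$ fully contains at least $2k-1$ consecutive blocks $B_{i,\ell}$. Since the blocks constituting $C_{i,j}$ are precisely those $B_{i,\ell}$ with $\ell$ in a fixed residue class modulo $k-1$, at least $\lfloor(2k-1)/(k-1)\rfloor=2$ of the fully contained blocks lie in $C_{i,j}$. Because the residue-$j_0$ indices in $P_i$ form an arithmetic progression with common difference $k-1$, any two such indices that are consecutive in the progression correspond to blocks that are consecutive in the $M^<_t$-enumeration of $C_{i,j}$. Hence, under the identification of $C_{i,j}$ with the left color class $\bigcup_{r=1}^{t}I_r$ of $M^<_t$, the two fully contained $C_{i,j}$-blocks become $I_r\cup I_{r+1}$ for some $r$, which is an interval of length $2t$. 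The same reasoning applied to $J$ and $C_{j,i}$ produces an interval of length at least $2t$ in the right color class, whereupon Observation~\ref{lem-jumbledBipartite} yields the required edge of $M^<(i,j)\subseteq M^<_{k,t}$.

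For the upper bound, the key step is to observe that an interval $I'\subseteq P_i$ of length at most $2kt$ meets at most $\lfloor 2kt/t\rfloor+1=2k+1$ of the blocks $B_{i,\ell}$, and hence at most $2k+1$ blocks of $C_{i,j}$. The analogous bound of $2k+1$ holds for the number of blocks of $C_{j,i}$ met by $J'$. Since $M^<_t$ places exactly one edge between each pair $(I_a,J_b)$ of color-class blocks, and the only edges of $M^<_{k,t}$ between $P_i$ and $P_j$ are those of $M^<(i,j)$, the number of edges between $I'$ and $J'$ is at most $(2k+1)(2k+1)=(2k+1)^2$.

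The step I expect to require the most care is verifying that the at-least-two fully contained $C_{i,j}$-blocks in the first part are indeed consecutive in the $M^<_t$-indexing, so that their union is genuinely an interval of length $2t$ rather than a gappy subset of the left color class. Once this modular bookkeeping is checked, Observation~\ref{lem-jumbledBipartite} does all the remaining work, and the counting argument in the second part is essentially routine.
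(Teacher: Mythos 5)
Your proof is correct and follows essentially the same approach as the paper: reduce the existence claim to Observation~\ref{lem-jumbledBipartite} by showing $I$ and $J$ each meet the relevant superblock in a length-$\geq 2t$ interval of the $M^<_t$-ordering, and count intersected block pairs for the upper bound. You just spell out the modular bookkeeping (consecutive residues mod $k-1$ become consecutive blocks of the $M^<_t$ copy) more explicitly than the paper, which states the intersection property without elaboration.
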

\begin{proof}
First, let $I$ and $J$ be the two intervals from the first part of the statement.
Since $|I| \geq 2kt$, the interval $I$ intersects each superblock $C_{i,j'}$, $j'\ \in [k] \setminus \{i\}$, in an interval of length at least $2t$.
Analogously, $J$ intersects each superblock $C_{j,i'}$, $i'\ \in [k] \setminus \{j\}$, in an interval of length at least $2t$.
Then, by Observation~\ref{lem-jumbledBipartite}, there is an edge of $M^<(i,j) \subseteq M^<_{k,t}$ between the sets $I \cap C_{i,j}$ and $J \cap C_{j,i}$.

Let $I'$ and $J'$ be the intervals from the second part of the statement.
Since $|I'| \leq 2kt$ and since the size of each block of $M^<_{k,t}$ is $t$, the interval $I'$ can intersect at most $2k+1$ blocks.
An analogous claim is true for the interval $J'$.
Since there is at most one edge between any pair of blocks, it follows that there can be at most $(2k+1)^2$ edges of $M^<_{k,t}$ between $I'$ and $J'$.
\qed
\end{proof}

We now proceed with the proof of Theorem~\ref{jumbled_k-partite_lower_bound}.
The proof is similar to the proof of Theorem~\ref{jumbled_bipartite_lower_bound}.

\begin{proof}[Proof of Theorem~\ref{jumbled_k-partite_lower_bound}]
For a given integer $k \geq 3$, we choose $t$ sufficiently large and express the number $m=k(k-1)t^2$ of vertices of $M^<_{k,t}$ as $m = 500k^3 n\log n$ for some positive integer $n$.
We will prove that 
$n^{4/3}\le r_{<}(M^<_{k,t}, K^<_3)$.
We set $\alpha=\frac{2}{3}$, $\beta=\frac{2}{3}$, and $\gamma=\frac{1}{3}$.
Note that this choice of parameters satisfies the conditions in the statement of Lemma~\ref{lem-lovaszApplication} with $\delta =\alpha-1/2=1/6$.

We set $N=n^{\alpha + \beta} = n^{4/3}$ and we partition $[N]$ into consecutive intervals $V_{1},\dots,V_{n^{\beta}}$, each
of length $n^{\alpha}$.
Similarly as before, we let $\Phi$ be the set of injective embeddings of $M^<_{k,t}$ into $[N]$ that respect the vertex ordering of $M^<_{k,t}$.
For $\phi \in \Phi$, let $G^<(\phi)$ be the ordered graph on the vertex set $[n^{\beta}]$ with an edge between $i$ and
$j$ if and only if there is an edge of $M^<_{k,t}$ between $\phi^{-1}(V_{i})$ and $\phi^{-1}(V_{j})$.
We also set 
\[
\mathcal{H}=\{G^<(\phi)\,:\,\phi\in\Phi,\, |E(G^<(\phi))|\ge40n\log n\}.
\]
Analogously as in the proof of Theorem~\ref{jumbled_bipartite_lower_bound}, we obtain $|\mathcal{H}|\le e^{n^{\beta}\log n}$.
Let $\mathcal{G}$ be the class of ordered graphs such that for every $H^< \in \mathcal{H}$ there is $G^<\in \mathcal{G}$ with exactly $40n\log{n}$ edges such that $G^<$ is an ordered subgraph of $H^<$.
Note that we can choose $\mathcal{G}$ so that $|\mathcal{G}| \leq |\mathcal{H}|$.

Applying Lemma~\ref{lem-lovaszApplication} to $\mathcal{G}$ with our choice of $\alpha$, $\beta$, $\gamma$, and $\delta$, we get a red-blue coloring $\chi'$ of the edges of $K^<_{n^\beta}$ that avoids a blue triangle, a red copy of any ordered graph from $\mathcal{G}$, and a red copy of $K^<_{10n^{1-\alpha} \log n,10n^{1-\alpha} \log n}$.

Let $\chi$ be the red-blue coloring of the edges of the ordered complete graph on $[N]$ where we color all edges between
$V_{i}$ and $V_{j}$ with color $\chi'(i,j)$ for all $i,j\in[n^{\beta}]$.
We color all edges within the sets $V_{i}$ red.
Note that $\chi$ contains no blue triangle, since $\chi'$ does not contain
a blue triangle.

Suppose for contradiction that for some $\phi\in\Phi$, the ordered matching $\phi(M^<_{k,t})$ is a red copy of $M^<$ in $\chi$.
We use $\phi(P_1),\dots,\phi(P_k)$ to denote the color classes of $\phi(M^<_{k,t})$.
Let $W_{i}=V(\phi(M^<))\cap V_{i}$ for each $i$ and let $S\subseteq[n^{\beta}]$ be the set of indices $i$ for which $|W_{i}|\le 2kt$.
We set $L=[n^{\beta}]\setminus S$.

By Lemma~\ref{lem-jumbled}, for any pair of indices $i,j\in L$ with $W_i$ and $W_j$ that are contained in different color classes of $\phi(M^<_{k,t})$, there is an edge of $\phi(M^<_{k,t})$ between $W_i$ and $W_j$ since $|W_{i}|,|W_{j}|> 2kt$.
Then, $\chi'(i,j)$ is red as all edges of $\phi(M^<)$ are red in $\chi$.
Thus, if there are two color classes $\phi(P_a)$ and $\phi(P_b)$, each with at least $10n^{1-\alpha} \log n$ sets $W_i$ with $i \in L$, then we have a red copy of $K_{10n^{1-\alpha} \log n,10n^{1-\alpha} \log n}^<$ in $\chi'$.
This is impossible by the choice of~$\chi'$.

Thus, at most one color class of $\phi(M^<_{k,t})$ contains at least $10n^{1-\alpha} \log n$ sets $W_i$ with $i \in L$.
Since $k \geq 3$, there are two color classes of $\phi(M^<_{k,t})$, without loss of generality $\phi(P_1)$ and $\phi(P_2)$, such that each one of them contains less than $10n^{1-\alpha} \log n$ sets $W_i$ with $i \in L$. 
Note that $|\phi(P_1)| = |\phi(P_2)| = m/k = 500k^2n\log{n}$.
Since the size of any set $W_i$ is at most $|V_i| = n^\alpha$, each set $W_i$ is incident to at most $n^\alpha$ edges of the ordered matching $\phi(M^<_{k,t})$.
Overall, all sets $W_i  \subseteq \phi(P_1)\cup \phi(P_2)$ with $i \in L$ are incident to at most $20n\log n$ edges of $\phi(M^<)$.
Therefore, there are at least $(500k^2-20)n\log n \geq  480k^2n\log{n}$ edges of $\phi(M^<)$ incident to sets $W_i \subseteq \phi(P_1) \cup \phi(P_2)$ with $i \in S$. 

Consider $W_i \subseteq \phi(P_1) $ and $W_j \subseteq \phi(P_2)$ such that $i,j \in S$.
We recall that $|W_i|, |W_j| \leq 2kt$.
By Lemma~\ref{lem-jumbled}, there are at most $(2k+1)^2 < 10k^2$ edges of~$\phi(M^<_{k,t})$ between $W_i$ and $W_j$.
Since there are at least $480k^2n \log n$ edges of~$\phi(M^<_{k,t})$ incident to sets $W_i \subseteq \phi(P_1)$ with $i \in S$, there are at least 
\[\frac{480k^2n \log n}{10k^2} > 40n \log n\]
red edges in the coloring $\chi'$.
This implies that $G^<(\phi) \in \mathcal{H}$. However, $G^<(\phi)$ has all edges red in the coloring $\chi'$ which contradicts the choice of $\chi'$.

Altogether, there is no red copy of $M^<$ nor a blue copy of $K^<_3$ in $\chi$ and thus $r_<(M^<,K^<_3) > N=n^{4/3}$. \qed
\end{proof}

\section{Proof of Theorem~\ref{thm-upperBound}}
\label{sec-upperBound}

In this section, we prove the upper bound $r_<(M^<,K^<_3) \in O(n^{7/4})$ for uniform random ordered matchings $M^<$ on $2n$ vertices with $\chi_<(M^<)=2$.
The proof is carried out using a multi-thread scanning procedure whose variants were recently used by Cibulka and Kyn\v{c}l~\cite{cibKyn17}, He and Kwan~\cite{kwan20}, and Rohatgi~\cite{rohatgi}.

First, note that the set of ordered matchings on $2n$ vertices with interval chromatic number $2$ is in one-to-one correspondence with the set of permutations on $[n]$.
Since it is often convenient to work with the permutation corresponding to a given ordered matching $M^<$ on $[2n]$ with $\chi_<(M^<) = 2$, we define the permutation $\pi_{M^<}$ as the permutation on $[n]$ that maps $i$ to $j-n$ for every edge $\{i,j\}$ of $M^<$.
A uniform random ordered matching on $[2n]$ then corresponds to a uniform permutation on $[n]$ selected uniformly at random.

Let $\chi$ be a red-blue coloring of the edges of $K^<_{2N}$ for some positive integer~$N$.
Let $A$ be an $N \times N$ matrix where an entry on position $(i,j) \in [N]\times[N]$ contains the color of the edge $\{i,N+j\}$ in $\chi$.
Note that a red copy of $M^<$ with one color class in~$[N]$ and the other one in $\{N+1,\dots,2N\}$ corresponds to an $n \times n$ submatrix of~$A$ with red entries on positions $(i,\pi_{M^<}(i))$ for $i=1,\dots,n$.

We now describe a procedure that we use to find a red copy of $M^<$ in $\chi$; see Figure~\ref{fig-scanning} for an illustration.
Let $T$ be a positive integer.
We try to find a red copy of $M^<$ in rows $t+1,\dots,t+n$ for every $t \in \{0,1,\dots,T-1\}$.
First, we scan through the row $\pi_{M^<}(1) + t$ of $A$ from left to right until we find a red entry in some
position $(\pi_{M^<}(1) + t, j_1)$. 
For every $i\in\{2,\dots,n\}$, after we have finished scanning through rows $\pi_{M^<}(1)+t,\dots,\pi_{M^<}(i-1)+t$, we scan through the row $\pi_{M^<}(i) + t$ of $A$, starting from column $j_{i-1} + 1$, until we find a red entry in some position $(\pi_{M^<}(i) + t, j_i)$. 

We call this \emph{multi-thread scanning} for $M^<$ and we call the set $Th(t)$ of entries of $A$ that are revealed in step $t$ a \emph{thread}.
Note that a thread $Th(t)$ successfully finds a red copy of $M^<$ if and only if some red copy of $M^<$ lies in the rows $t + 1,\dots, t + n$ of $A$.
Moreover, if the thread $Th(t)$ does not find a red copy of~$M_<$, then it reveals at least $N-n$ blue entries of $A$.

\begin{figure}[ht]
    \centering
    \includegraphics{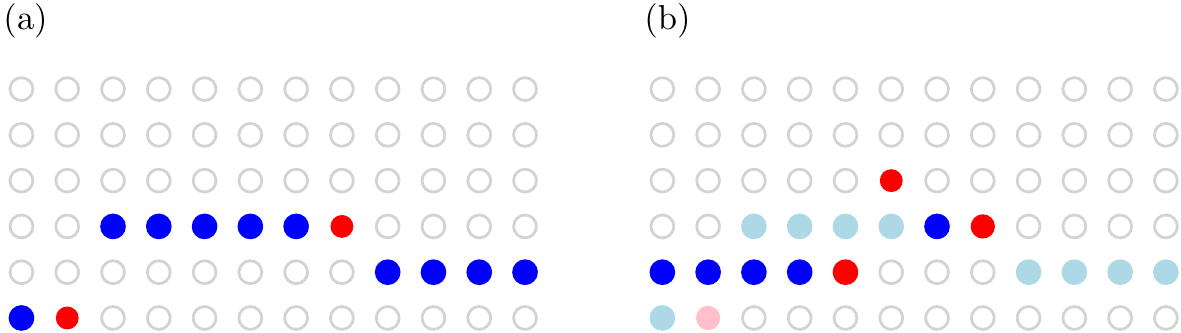}
    \caption{An illustration of the multi-thread scanning procedure for the ordered matching $M^<$ with the corresponding permutation $\pi(M^<) = 132$.
    (a) Thread $Th(0)$ did not find a red copy of $M^<$. (b) Thread $Th(1)$ successfully found a  red copy of $M^<$. The entries whose color was previously revealed by thread $Th(0)$ are denoted by light blue and light red.}
    \label{fig-scanning}
\end{figure}

For a permutation $\pi$ on $[n]$, we say that a subset $C\subseteq[n]$ with $|C|=k$ is a \emph{shift} of another subset $D\subseteq [n]$
in $\pi$ if there is a positive integer $\Delta$ such that $\pi\left(c_{i}\right)=\pi\left(d_{i}\right)+\Delta$ for each $i \in [k]$ where $c_1<\dots<c_k$ and $d_1<\dots<d_k$ are the elements of~ $C$ and $D$, respectively. 
Let $L(\pi)$ be the largest positive integer $k$ for which there are sets $C,D\subseteq [n]$, each of size $k$,
such that $C$ is a shift of $D$.
This notion captures the maximum size of a pattern that a permutation can share with its translation.

We now state the following upper bound on ordered Ramsey numbers of ordered matchings $M^<$ with restricted $L(\pi_{M^<})$ versus triangles, which is used later to derive Theorem~\ref{thm-upperBound}.
A similar result was proved by Rohatgi~\cite{rohatgi}, but it yields asymptotically weaker bounds.

\begin{theorem}
\label{thm-upperBoundStronger}
For a positive integer $n$, let $M^<$ be an ordered matching on $2n$ vertices with $\chi_<(M^<)=2$ and $L(\pi_{M^<}) \leq \ell$.
If $N \geq 4n(\sqrt{n \ell} +1)$, then every red-blue coloring $\chi$ of the edges of $K^<_{2N}$ on $[2N]$ satisfies at least one of the following three claims:
\begin{enumerate}[label=(\alph*)]
\item $\chi$ contains a blue copy of $K^<_3$,
\item $\chi$ contains a red copy of $K^<_{2n}$, or
\item $\chi$ contains  a red copy of $M^<$ between $[N]$ and $\{N+1,\dots,2N\}$.
\end{enumerate}
\end{theorem}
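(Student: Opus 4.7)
The plan is to argue by contradiction: assume a red-blue coloring $\chi$ of $K^<_{2N}$ with $N \geq 4n(\sqrt{n\ell}+1)$ avoids all three outcomes (a), (b), (c), and derive a contradiction via the multi-thread scanning procedure. First, I observe that the failure of (a) and (b) yields a strong degree bound: any vertex with at least $2n$ blue neighbors would, in the absence of a blue triangle, force every pair of such blue neighbors to be joined by a red edge, producing a red $K^<_{2n}$. Hence every vertex has fewer than $2n$ blue neighbors, and in particular every row and every column of the $N \times N$ bipartite color matrix $A$ contains at most $2n-1$ blue entries.

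Next, I run the multi-thread scanning with all $T = N - n + 1$ threads. Since (c) fails, no thread finds a red copy of $M^<$, and the analysis preceding the theorem shows that each failing thread reveals exactly $N$ cells of $A$ (one per column), of which at most $n-1$ are red and at least $N - n + 1$ are blue. Defining the maximum cell multiplicity $\mu := \max_{(r,c)} |\{t : Th(t) \text{ reveals } (r,c)\}|$, double counting (thread, revealed blue cell) pairs yields
\[
T(N - n + 1) \;\leq\; \mu \cdot (2n-1)N,
\]
using the column bound on the total number of blue cells of $A$. Substituting $T = N - n + 1$, this gives a lower bound $\mu \gtrsim N/(2n)$.

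The technical heart of the argument is to bound the multiplicity in the opposite direction, showing $\mu = O(\sqrt{n\ell})$. The key observation is that if a cell $(r,c)$ is revealed by threads $Th(t_1),\dots,Th(t_k)$ with $t_1 < \cdots < t_k$, then $t_j = r - \pi_{M^<}(i_j)$ for distinct $i_j \in [n]$, and $\pi_{M^<}(i_j)$ is strictly decreasing in $j$. My plan is to apply the Erd\H{o}s--Szekeres theorem to the pattern of $\pi_{M^<}$ on $\{i_1,\dots,i_k\}$ to extract a monotone subsequence of length $\sqrt{k}$, then pigeonhole on the pairwise $\pi_{M^<}$-differences within this subsequence (which lie in $[1,n-1]$) to isolate a single shift $\Delta > 0$ supported by many sort-consistent index pairs $(c,d)$ with $c < d$ and $\pi_{M^<}(c) = \pi_{M^<}(d) + \Delta$. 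Such pairs, by definition, assemble into a shift configuration in the sense of $L(\pi_{M^<})$, which must therefore have size at most $\ell$. Chasing constants and combining with the lower bound $\mu \gtrsim N/(2n)$ and the assumption $N \geq 4n(\sqrt{n\ell}+1)$ should then force the desired contradiction.

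The hard part is precisely this multiplicity bound. A monotone subsequence of $(\pi_{M^<}(i_1),\dots,\pi_{M^<}(i_k))$ of length $\sqrt{k}$ does not immediately translate into a shift configuration of the required size: one must extract many sort-consistent pairs that share a single $\Delta$ \emph{and} whose index orderings assemble into a valid pair $(C,D)$ in the sense of the definition of $L(\pi_{M^<})$. Achieving the optimal $\sqrt{n\ell}$ factor will likely require a careful double application of Erd\H{o}s--Szekeres that balances the pigeonhole losses on $\Delta$ against the losses incurred when extracting a both-coordinate-increasing sub-matching.
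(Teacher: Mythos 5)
Your setup (degree bound forcing every vertex to have fewer than $2n$ blue neighbors, the $N\times N$ bipartite color matrix, and multi-thread scanning) matches the paper. However, your counting scheme diverges from the paper's and has a genuine gap at the step you yourself flag as ``the technical heart.''

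The paper does \emph{not} run all $N-n+1$ threads and bound single-cell multiplicity. It runs only $T=\sqrt{n/\ell}$ threads and, for each ordered pair of threads $t'<t$, bounds the number of \emph{overlapping segments} between $S(t)$ and $S(t')$ by $\ell$. The key point is that if $k$ segments of $S(t)$ intersect segments of $S(t')$, those $k$ segments lie in $k$ \emph{distinct rows}, the row-correspondence gives $\pi(i')=\pi(i)+(t-t')$ for $k$ pairs, and the left-to-right consistency of the column ranges in both threads forces the sorted orders of the $i$'s and $i'$'s to agree --- producing a bona fide shift of size $k$. Each thread then reveals at least $N-2n-2tn\ell\geq N/2$ \emph{new} blue cells, and the pigeonhole over $n+T$ rows gives a vertex with at least $2n$ blue neighbors, the contradiction.

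Your approach instead tries to bound the multiplicity $\mu$ of a \emph{single cell} $(r,c)$, and that is where the argument breaks. All $k$ threads revealing $(r,c)$ scan the \emph{same} row $r$, via indices $i_j$ with $\pi(i_j)=r-t_j$. A shift requires two sets whose sorted elements satisfy $\pi(c_l)=\pi(d_l)+\Delta$ --- and the paper gets this from segment-pairs spread over many rows, not from one row. Your Erd\H{o}s--Szekeres + pigeonhole plan, applied carefully, yields only the following: a monotone $i$-subsequence of length $\sqrt{k}$, then among its $\binom{\sqrt{k}}{2}$ ordered pairs some $\Delta\in[1,n-1]$ is shared by roughly $k/(2n)$ pairs, which do assemble into a shift of that size. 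That gives $\ell\geq k/(2n)$, i.e.\ $\mu\leq 2n\ell$. But the trivial bound is already $\mu\leq n$ (only $n$ threads can ever scan row $r$, since $t$ scans row $r$ iff $r-t\in[n]$), so for $\ell\geq 1$ this is vacuous. There is no factor-$\sqrt{n/\ell}$ improvement to be had by ``a careful double application of Erd\H{o}s--Szekeres'': the obstruction is structural --- a single cell simply does not encode a large shift --- not a matter of tuning the pigeonhole losses. Consequently your chain $T(N-n+1)\leq\mu(2n-1)N$ combined with $\mu=O(\sqrt{n\ell})$ cannot be completed, and even the provable $\mu\leq n$ gives only $N=O(n^2)$, worse than the target. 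To repair the argument you would have to count shared cells per \emph{pair of threads} (as the paper does, via the segment-overlap claim), not per cell.
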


\begin{proof}
Let $\chi$ be a red blue coloring of the edges of $K^<_{2N}$ on $[2N]$.
Suppose for contradiction that $\chi$ satisfies none of the three claims from the statement of the theorem.

Let $A$ be the matrix $N \times N$ matrix where an entry on position $(i,j) \in [N]\times[N]$ contains the color of the edge $\{i,N+j\}$ in $\chi$.
We set $T=\sqrt{n/\ell}$ and we run the multi-thread scanning for $M^<$ in $A$ with $T$ threads.
For every $t \in \{0,1,\dots,T-1\}$, the blue entries from the thread $Th(t)$ intersect each row of $A$ in a set that we call \emph{segment}.
Let $S(t)$ be the set of segments obtained from $Th(t)$.

Observe that each segment forms an interval of blue entries in a row of $A$.
Moreover, each segment has length less than $2n$ as otherwise there is a vertex of~$K_{2N}^<$ incident to at least $2n$ blue edges in $\chi$ and, since there is no blue triangle in $\chi$, the neighborhood of such a vertex induces a red copy of $K^<_{2n}$.
This is impossible by our assumptions on $\chi$.

\begin{claim}
Fix $t$ and $t'$ with $0 \leq t' < t \leq T$.
Assume that $k$ segments in $S(t)$ intersect with some segments from $S(t')$.
Then, $L(\pi_{M^<}) \geq k$.
\end{claim}

Each segment from $S(t)$ intersects at most one segment from $S(t')$ as no two segments from $S(t)$ lie in the same row of $A$ and the same claim is true for segments from $S(t')$.
Moreover, if two segments intersect, then they are contained in the same row of $A$.
Let $s^t_1$ and $s^t_2$ be two segments from $S(t)$ and let $s^{t'}_1$ and $s^{t'}_2$ be two segments from $S(t')$ and assume $s^t_1 \cap s^{t'}_1 \neq \emptyset$ and $s^t_2 \cap s^{t'}_2 \neq \emptyset$.
Then, the columns of $A$ intersected by $s^t_1$ are to the left of the columns intersected by $s^t_2$ if and only if the columns of $A$ intersected by $s^{t'}_1$ are to the left of the columns intersected by $s^{t'}_2$.
This is because no two segments from $S(t)$ intersect the same column from $A$ and the same claim is true for segments from $S(t')$.
Altogether, the $k$ segments from $S(t)$ intersect exactly $k$ segments in $S(t')$ and indices of their rows decreased by $t$ and $t-t'$, respectively, form a shift in $\pi_{M^<}$ of size $k$.
The claim follows.

Consider some $t \in \{0,1,\dots,T-1\}$.
Since no thread succeeds in $\chi$, the thread $Th(t)$ reveals at least $N-n$ blue entries of $A$.
The claim and our assumption $L(\pi_{M^<}) \leq \ell$ imply that the segments from $S(t)$ intersect at most $\ell$ segments from $S(t')$ for every $t' < t$.
Since each segment has length at most $2n$, the thread $Th(t)$ reveals at least $N - 2n - 2tn\ell$ new blue entries of $A$.
This is at least $N/2$ by our assumption $N \geq 4n(\sqrt{n\ell}+1)$ and by the choice of $T$ since
\[N - 2n - 2tn\ell \geq N-2n(T\ell+1) = N - 2n(\sqrt{n\ell}+1) \geq  N/2.\]
Thus, the total number of blue entries in $A$ is at least $TN/2$.
Since the multi-thread scanning visited $n+T$ rows of $A$, there is a vertex $v$ of $K^<_{2N}$ incident to at least $\frac{TN}{2(T+n)}$ blue edges in $\chi$.
Now, our assumption $N \geq 4n(\sqrt{n\ell}+1)$ and the choice of $T$ implies
\[\frac{TN}{2(T+n)} \geq \frac{\sqrt{\frac{n}{\ell}}4n(\sqrt{n\ell}+1)}{2\left(\sqrt{\frac{n}{\ell}}+n\right)} = 2n.\]
Thus, the blue neighborhood of the vertex $v$ contains either a blue triangle or a red copy of $K^<_{2n}$.
This contradicts our assumptions on $\chi$.
\qed
\end{proof}

For every $\varepsilon>0$, Theorem~\ref{thm-upperBoundStronger} immediately implies that $r_<(M^<,K^<_3) \in O(n^{2-\varepsilon})$ for every ordered matching with $\chi_<(M^<)=2$ and $L(\pi_{M^<}) \leq n^{1-2\varepsilon}$.
We show that this is the case for uniform random ordered matchings with interval chromatic number 2 by using the following result by He and Kwan~\cite{kwan20} about the maximum length of a shift in a uniform random permutation on $[2n]$.

\begin{lemma}[\cite{kwan20}]
\label{lem-HeKwan}
A uniform random permutation $\pi$ on $[n]$
satisfies $L\left(\pi\right)\le3\sqrt{n}$ with high probability.
\end{lemma}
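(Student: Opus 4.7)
My plan is a first-moment argument. Let $X_k$ count triples $(C, D, \Delta)$ with $C, D \in \binom{[n]}{k}$ and $\Delta \ge 1$ such that, writing $C$ and $D$ in increasing order as $\{c_1 < \dots < c_k\}$ and $\{d_1 < \dots < d_k\}$, the permutation $\pi$ satisfies $\pi(c_i) = \pi(d_i) + \Delta$ for every $i$. Then $L(\pi) \ge k$ if and only if $X_k \ge 1$, so Markov's inequality reduces the task to showing $\mathbb{E}[X_k] = o(1)$ for $k = \lfloor 3\sqrt{n}\rfloor + 1$.

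The key structural step is to orient the constraint graph: on the vertex set $E := C \cup D$, draw a directed edge from $d_i$ to $c_i$ for each $i \in [k]$. Every vertex has in- and out-degree at most $1$, and any directed cycle of length $r$ would telescope the constraints to $r\Delta = 0$, which is impossible for $\Delta > 0$. Hence the graph is a forest; if it has $c$ components then $|E| = k + c$, and on each component the values of $\pi$ are forced to form an arithmetic progression of common difference $\Delta$, determined by a single starting value. Consequently at most $n^c(n - k - c)!$ permutations of $[n]$ realize all $k$ constraints, so the probability that a uniform random $\pi$ does is at most $n^c/(n)_{k+c}$, where $(n)_m$ denotes the falling factorial.

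Counting triples by $c$ (equivalently by the overlap $|C \cap D| = k - c$): there are $n$ choices of $\Delta$, $\binom{n}{k+c}$ of $E$, and $\binom{k+c}{k-c}\binom{2c}{c}$ of the decomposition of $E$ into $C$ and $D$. The $(k+c)!$ factors cancel, leaving
\[
\mathbb{E}[X_k] \;\le\; n \sum_{c=1}^{k} \frac{n^c}{(k-c)!\,(c!)^2}.
\]
Analyzing the consecutive-ratio $a_{c+1}/a_c = n(k-c)/(c+1)^2$ shows that the summand peaks near $c = k - 9$ and is, by Stirling, at most polynomial in $n$ times $(e^2 n/k^2)^k$. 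With $k \approx 3\sqrt{n}$ this becomes $\mathrm{poly}(n) \cdot (e^2/9)^{3\sqrt{n}} = o(1)$, since $e^2/9 < 1$. The main technical obstacle I anticipate is the overlap bookkeeping: the small-$c$ terms correspond to long constraint-path forests that force long arithmetic-progression patterns into $\pi$, and one must ensure the denominator $(k-c)!\,(c!)^2$ absorbs the growing $n^c$ without losses. The displayed telescoping is precisely what makes this cancellation work and is what ultimately squeezes out the constant $3$ in the statement (above the natural threshold $e$).
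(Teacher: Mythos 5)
The paper does not supply a proof of this lemma; it is quoted from He and Kwan~\cite{kwan20} as a black box, so there is no in-text argument to compare against. Your blind proof is a self-contained first-moment calculation and, as far as I can tell, it is correct; it is also the natural route, and I believe it coincides with the original argument. Your key structural observation---that the digraph with edges $d_i \to c_i$ on $C \cup D$ has in- and out-degree at most $1$ and no directed cycle (a cycle of length $r$ would telescope to $r\Delta = 0$), hence is a disjoint union of directed paths with exactly $c = |C\cup D| - k$ components---is precisely what licenses the probability bound $n^c/(n)_{k+c}$, since each path forces an arithmetic progression of $\pi$-values pinned down by a single start value. The bookkeeping then reduces cleanly to $\mathbb{E}[X_k] \le n\sum_{c=1}^k n^c/\bigl((k-c)!\,(c!)^2\bigr)$, the ratio test $a_{c+1}/a_c = n(k-c)/(c+1)^2$ places the maximal summand near $c = k - k^2/n$, and Stirling gives $\mathbb{E}[X_k] \le \mathrm{poly}(n)\cdot(e^2 n/k^2)^{k}$, which is $o(1)$ exactly when $k > e\sqrt{n}$; this is where the constant~$3 > e$ comes from. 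Two details worth spelling out if you write this up: your count $\binom{n}{k+c}\binom{k+c}{k-c}\binom{2c}{c}$ also includes pairs $(C,D)$ whose constraint graph contains a cycle (those contribute probability $0$, so the inequality only improves), and the $n^c$ factor ignores both the requirement that each forced progression stays inside $[n]$ and that the progressions of different components be value-disjoint. Neither of these is a gap---you are upper-bounding throughout---but they merit a sentence each.
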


Now, it suffices to show that Theorem~\ref{thm-upperBoundStronger} together with Lemma~\ref{lem-HeKwan} implies Theorem~\ref{thm-upperBound}.

\begin{proof}[Proof of Theorem~\ref{thm-upperBound}]
Let $M^<$ be the uniform random ordered matching on $[2n]$.
By Lemma~\ref{lem-HeKwan}, we have $L\left(\pi\right)\le3\sqrt{n}$ with high probability.
Thus, applying Theorem~\ref{thm-upperBoundStronger} with $\ell=3\sqrt{n}$, we obtain 
\[r_<(M^<,K^<_3) \leq 4n(\sqrt{3n^{3/2}} +1) \in O(n^{7/4})\]
with high probability.\qed
\end{proof}

\paragraph{Acknowledgement}

Martin Balko was supported by the grant no.~23-04949X of the Czech Science Foundation (GA\v{C}R) and by the Center for Foundations of Modern Computer Science (Charles University project UNCE/SCI/004). This article is part of a project that has received funding from the European Research Council (ERC) under the European Union's Horizon 2020 research and innovation programme (grant agreement No. 810115).

\bibliography{bibliography}	

\begin{thebibliography}{10}

\bibitem{alon16}
N.~Alon and J.~H. Spencer.
\newblock {\em The probabilistic method}.
\newblock Wiley Series in Discrete Mathematics and Optimization. John Wiley \&
  Sons, Inc., Hoboken, NJ, fourth edition, 2016.

\bibitem{balko}
M.~Balko, J.~Cibulka, K.~Král, and J.~Kynčl.
\newblock Ramsey numbers of ordered graphs.
\newblock {\em Electron. J. Combin.}, 27(1), 2020.

\bibitem{bjv16}
M.~Balko, V.~Jel{\' i}nek, and P.~Valtr.
\newblock On ordered {R}amsey numbers of bounded-degree graphs.
\newblock {\em J. Combin. Theory Ser. B}, 134:179--202, 2019.

\bibitem{cp02}
S.~A. Choudum and B.~Ponnusamy.
\newblock Ordered {R}amsey numbers.
\newblock {\em Discrete Math.}, 247(1-3):79--92, 2002.

\bibitem{crst83}
V.~Chv\'{a}tal, V.~R\"{o}dl, E.~Szemer\'{e}di, and W.~T. Trotter, Jr.
\newblock The {R}amsey number of a graph with bounded maximum degree.
\newblock {\em J. Combin. Theory Ser. B}, 34(3):239--243, 1983.

\bibitem{cibKyn17}
J.~Cibulka and J.~Kyn\v{c}l.
\newblock Better upper bounds on the {F}\"{u}redi-{H}ajnal limits of
  permutations.
\newblock In {\em Proceedings of the {T}wenty-{E}ighth {A}nnual {ACM}-{SIAM}
  {S}ymposium on {D}iscrete {A}lgorithms}, pages 2280--2293. SIAM,
  Philadelphia, PA, 2017.

\bibitem{conlon}
D.~Conlon, J.~Fox, C.~Lee, and B.~Sudakov.
\newblock Ordered {R}amsey numbers.
\newblock {\em J. Combin. Theory Ser. B}, 122:353--383, 2017.

\bibitem{recent_developments}
D.~Conlon, J.~Fox, and B.~Sudakov.
\newblock Recent developments in graph {R}amsey theory.
\newblock In {\em Surveys in combinatorics 2015}, volume 424 of {\em London
  Math. Soc. Lecture Note Ser.}, pages 49--118. Cambridge Univ. Press,
  Cambridge, 2015.

\bibitem{erdosprvni}
P.~Erd\H{o}s and G.~Szekeres.
\newblock A combinatorial problem in geometry.
\newblock {\em Compositio Math.}, 2:463--470, 1935.

\bibitem{kwan20}
X.~He and M.~Kwan.
\newblock Universality of random permutations.
\newblock {\em Bull. Lond. Math. Soc.}, 52(3):515--529, 2020.

\bibitem{odhadtrojuhelnik2}
J.~H. Kim.
\newblock The {R}amsey number {$R(3,t)$} has order of magnitude {$t^2/\log t$}.
\newblock {\em Random Structures Algorithms}, 7(3):173--207, 1995.

\bibitem{msw15}
K.~G. Milans and D.~B. Stolee, D.and~West.
\newblock Ordered {R}amsey theory and track representations of graphs.
\newblock {\em J. Comb.}, 6(4):445--456, 2015.

\bibitem{pachTardos06}
J.~Pach and G.~Tardos.
\newblock Forbidden paths and cycles in ordered graphs and matrices.
\newblock {\em Israel J. Math.}, 155:359--380, 2006.

\bibitem{ramseyprvni}
F.~P. Ramsey.
\newblock On a {P}roblem of {F}ormal {L}ogic.
\newblock {\em Proc. London Math. Soc. (2)}, 30(4):264--286, 1929.

\bibitem{rohatgi}
D.~Rohatgi.
\newblock Off-diagonal ordered {R}amsey numbers of matchings.
\newblock {\em Electron. J. Combin.}, 26(2):Paper No. 2.21, 18, 2019.

\end{thebibliography}
\bibliographystyle{plain}
%
% ---- Bibliography ----
%
%\begin{thebibliography}{6}
%%

\end{document}